\documentclass[11pt,twoside,reqno]{amsart}
\usepackage{amssymb}
\usepackage{enumerate}
\usepackage{amsmath}
\usepackage{a4wide}
\usepackage[usenames]{color}
\usepackage[dvipsnames]{xcolor}
\usepackage{soul}
\usepackage[all]{xy}
\usepackage{array}
\usepackage{graphicx}
\usepackage{tabularx}
\usepackage[table]{xcolor}
\usepackage{booktabs}
\usepackage{verbatim}
\usepackage[hidelinks=true]{hyperref} 

\usepackage{listings}

\definecolor{codegreen}{rgb}{0,0.6,0}
\definecolor{codegray}{rgb}{0.5,0.5,0.5}
\definecolor{codepurple}{rgb}{0.58,0,0.82}
\definecolor{backcolour}{rgb}{0.95,0.95,0.92}

\lstdefinestyle{Pstystyle}{
    backgroundcolor=\color{backcolour},   
    commentstyle=\color{codegreen},
    keywordstyle=\color{magenta},
    numberstyle=\tiny\color{codegray},
    stringstyle=\color{codepurple},
    basicstyle=\ttfamily\footnotesize,
    breakatwhitespace=false,         
    breaklines=true,                 
    captionpos=b,                    
    keepspaces=true,                 
    numbers=left,                    
    numbersep=5pt,                  
    showspaces=false,                
    showstringspaces=false,
    showtabs=false,                  
    tabsize=2
}

\usepackage{mathtools}

\usepackage{lmodern}
\usepackage[utf8]{inputenc}
\usepackage[L7x]{fontenc}  

\definecolor{blanchedalmond}{rgb}{1.0, 0.92, 0.8}

\usepackage{array}

\usepackage[
   backend=bibtex, 
   style=numeric,
   isbn=false,
   giveninits=true,
   doi=false,
   url=false]{biblatex}
   
\newtheorem{theorem}{Theorem}

\newtheorem{lemma}[theorem]{Lemma}

\newtheorem{problem}[theorem]{Problem}

\theoremstyle{remark}
\newtheorem{remark}[theorem]{Remark}

\def\leq{\leqslant} \def\geq{\geqslant} \def\al{\alpha}

\begin{document}

\title{Linear relations of four conjugates of an algebraic number of degree eight}

\author{Žygimantas Baronėnas, Paulius Drungilas, and Jonas Jankauskas}

\address{Institute of Mathematics, Faculty of Mathematics and Informatics, Vilnius
University, Naugarduko 24, Vilnius LT-03225, Lithuania}
\email{zygimantas.baronenas@mif.stud.vu.lt}

\address{Institute of Mathematics, Faculty of Mathematics and Informatics, Vilnius
University, Naugarduko 24, Vilnius LT-03225, Lithuania}
\email{paulius.drungilas@mif.vu.lt}

\address{Institute of Mathematics, Faculty of Mathematics and Informatics, Vilnius
University, Naugarduko 24, Vilnius LT-03225, Lithuania}
\email{jonas.jankauskas@mif.vu.lt}

\subjclass[2020]{11R04, 11R32} \keywords{Algebraic numbers, linear relations in algebraic conjugates}

\begin{abstract}
We characterize all algebraic numbers $\alpha$ of degree $8$ for which there exist four distinct algebraic conjugates $\alpha_1$, $\alpha_2$, $\alpha_3$, $\alpha_4$ of $\alpha$ satisfying the linear relation $\alpha_{1}=\alpha_{2}+\alpha_{3}+\alpha_{4}$. 
Analogous characterization is obtained for the linear relation $\alpha_{1}+\alpha_{2}+\alpha_{3}+\alpha_{4}=0$. 
In particular, when an algebraic number $\alpha$ of degree $8$ has a non-even minimal polynomial and possesses exactly six distinct 
linear relations of the form $\alpha_{i_1}+\alpha_{i_2}+\alpha_{i_3}+\alpha_{i_4}=0$, we prove that $\alpha$ 
is a sum of a quadratic and a quartic algebraic number. 
\end{abstract}
\maketitle

\section{Introduction}\label{intro}

In this paper we are interested in linear relations among the algebraic conjugates of an algebraic number. 
More precisely, we are interested in the following problem:

\begin{problem}\label{prb1}
Suppose that $a_1,a_2,\dotsc,a_n$ are non-zero integers. Find all algebraic numbers $\alpha$ which satisfy the linear relation
\begin{equation}\label{eqlr1}
a_1\alpha_1+a_2\alpha_2+\dotsb+a_n\alpha_n=0,
\end{equation}
where $\alpha_1,\alpha_2,\dotsc,\alpha_n$ are distinct (not necessarily all) algebraic conjugates of $\alpha$.
\end{problem}

Let $\alpha$ be an algebraic number of degree $d$ over the field of rational numbers $\mathbb{Q}$ whose minimal polynomial is
\begin{equation*}\label{eqmpol1}
p(x)=x^d+c_{d-1}x^{d-1}+\dotsb+c_1x+c_0,
\end{equation*}
where $c_0,c_1,\dotsc,c_{d-1}\in\mathbb{Q}$. 
Let $\alpha_{1}:=\alpha, \alpha_{2}, \dots, \alpha_{d}$ be the algebraic conjugates of $\alpha$, 
so that  $\alpha_{1}, \alpha_{2}, \dots, \alpha_{d}$ are all the roots of $p(x)$, which in turn 
is an irreducible polynomial in $\mathbb{Q}[x]$. 
The number $tr(\alpha):=\alpha_1+\alpha_2+\dotsb+\alpha_d$ is called the \textit{trace} (or the \textit{absolute trace}) of $\alpha$. 
Vieta's formulas yield $\alpha_1+\alpha_2+\dotsb+\alpha_d=-c_{d-1}$. 
So that if $tr(\alpha)=-c_{d-1}=0$, then we have the linear relation $\alpha_1+\alpha_2+\dotsb+\alpha_d=0$ 
with $n=d$ and $a_1=a_2=\dotsc =a_{d}=1$ in \eqref{eqlr1}. 
The linear relation \eqref{eqlr1} for an algebraic number $\alpha$ of degree $d$ is called \textit{trivial} if 
$n=d$ and $a_1=a_2=\dotsc =a_{d}$. Conversely, if the linear relation \eqref{eqlr1} for an algebraic number $\alpha$ is trivial, then $tr(\alpha)=0$.  

As far as we know, the first general result in the context of Problem~\ref{prb1} was obtained by Kurbatov \cite{Kurbatov1977}, 
who proved that no algebraic number of prime degree possesses a non-trivial linear relation \eqref{eqlr1}. 
Moreover, Baron et al \cite{Baron1995} (see also \cite{Dixon1997}) proved that if the Galois group of the normal closure 
of the number field generated by an algebraic number $\alpha$ acts 2-transitively on the set of 
algebraic conjugates of $\alpha$, then there are no non-trivial linear relations \eqref{eqlr1} for $\alpha$.

Smyth \cite{Smyth1986additive} investigated a similar problem of finding all tuples 
$(a_1,a_2,\dotsc,a_n)$, $gcd(a_1,a_2,\dotsc,a_n)=1$, of non-zero integers which satisfy \eqref{eqlr1} for some non-zero algebraic 
conjugate numbers $\alpha_1,\alpha_2,\dotsc,\alpha_n$ (not necessarily distinct). 
He proved that any such tuple satisfies $|a_i|\leq \sum_{j\neq i} |a_j|$ for every $i\in\{1,2,\dotsc,n\}$, 
and for every prime number $p$, at most $n-2$ numbers $a_i$ are divisible by $p$.  
He also conjectured that this necessary condition is sufficient. 
This conjecture was recently proved by Ellenberg and Hardt \cite{EllenbergHardt2025} (generalization of Smith's conjecture to global fields can be found in \cite{HardtYin2022}).

Girstmair in \cite{Girstmair1999} proposed a theoretical framework to study linear and multiplicative relations \eqref{eqlr1} for $\alpha$, based on representation theory of finite groups, applied to the Galois group of the normal closure of $\mathbb{Q}(\alpha)$. Among other things, he proved that there exists an algebraic number $\alpha$ of degree 9 whose distinct algebraic conjugates satisfy the linear relation
\[
4\alpha_1 + \alpha_2 + \alpha_3 + \alpha_4 + \alpha_5 - 2\alpha_6 - 2\alpha_7 - 2\alpha_8 - 2\alpha_9 = 0.
\]

Dubickas \cite{Dubickas2002} (see also \cite[Theorem~3']{Dixon1997}) proved that the linear relation \eqref{eqlr1} can't hold if $|a_1|\geq \sum_{i=2}^n|a_i|$ and $n\geq 3$. This generalizes the result obtained by Smyth \cite{Smyth1982} for the linear relation $\alpha_1 \pm \alpha_2 \pm 2\alpha_3=0$.  

One more general result was obtained by Dubickas and Virbalas in \cite{DuVi25}, where they proved that every linear relation implies a corresponding multiplicative relation (several more results on multiplicative relations can be found in \cite{Ferguson1997}). 


Considerable effort has been dedicated to studying Problem~\ref{prb1} for small values of $n$, $\deg(\alpha)$ and $a_1,a_2,\dotsc,a_n$. 
Consider a non-trivial linear relation \eqref{eqlr1} with $n=2$. 
One can easily show that it is equivalent to $\alpha_1+\alpha_2=0$, which implies that 
the minimal polynomial $p(x)$ of $\alpha$ is an even polynomial. 
Conversely, if $p(x)$ is an even polynomial, then some two distinct algebraic conjugates 
of $\alpha$ satisfy $\alpha_1+\alpha_2=0$. 
This is the only case ($n=2$) in which Problem~\ref{prb1} has been completely solved. 

When $n=3$ in \eqref{eqlr1}, we have two relations of the simplest form: $\alpha_1+\alpha_2=\alpha_3$ and $\alpha_1+\alpha_2+\alpha_3=0$. 
Girstmair \cite[Proposition~9]{Girstmair1999} (see also \cite[Theorem~5]{Dixon1997}) proved that if 
$G$ is a finite abelian group whose order $|G|$ is divisible by 6, then there exists an 
irreducible polynomial with Galois group $G$ whose three distinct roots satisfy $\alpha_1+\alpha_2=\alpha_3$.  
He also proved that $6\mid |G|$ is a necessary condition in this case. 
See \cite{Girstmair1982, Girstmair2006, Girstmair2007, Girstmair2008, Lalande2007, Lalande2010, DubickasHareJankauskas2017} for other results related to this linear relation.  
On the other hand, Dubickas and Jankauskas \cite{DubickasJankauskas2015} proved that if the linear relation $\alpha_1+\alpha_2=\alpha_3$ holds for $\alpha$ of degree $d\leq 8$, then $d=6$. 
Moreover, they proved that this linear relation holds for a sextic algebraic number $\alpha$ if and 
only if $\alpha$ is a root of an irreducible polynomial of the form 
\[
x^6+2ax^4+a^2x^2+b\in\mathbb{Q}[x].
\]
It was proved in \cite{DubickasJankauskas2015} that the linear relation $\alpha_1+\alpha_2+\alpha_3=0$ holds for $\alpha$ of degree $4\leq d \leq 8$ if and only if $d=6$ and the minimal polynomial of $\alpha$ is of the form
\[
p(x) = x^{6} + 2ax^{4} + 2bx^{3} + (a^{2} - c^{2}t)x^{2} + 2(ab - cet)x + b^{2} - e^{2}t
\]
for some rational numbers $a, b, c, e\in\mathbb{Q}$ and some square-free integer $t\in\mathbb{Z}$. 
Virbalas \cite{Virbalas2025} determined all possible linear relations \eqref{eqlr1} in case when $n=3$ 
and $\deg(\alpha)\leq 8$. He also described all possible Galois groups of such algebraic numbers $\alpha$. 
Moreover, in \cite{Virbalas2025} Virbalas proved that no algebraic number of degree $2p$, where $p\geq 5$ is a prime number, satisfies the linear relation $\alpha_1+\alpha_2+\alpha_3=0$. 

Dubickas and Virbalas in \cite{DuVi25} determined all possible linear relations \eqref{eqlr1} for quartic algebraic numbers $\alpha$ (see also \cite{Kitaoka2017}). The multiplicative analog was obtained by Serrano Holgado \cite{Serrano2025}.  

In \cite{Baronenas2026} we proved that no algebraic number $\alpha$ of degree $d\in\{4,5,6,7\}$ satisfies 
the linear relation $\alpha_1=\alpha_2+\alpha_3+\alpha_4$. Our first main result extends to $d=8$.

\begin{theorem}\label{t1}
Let $\alpha$ be an algebraic number of degree $d=8$. Some four distinct algebraic conjugates of $\alpha$ satisfy the relation
\begin{equation*}
\alpha_{1}=\alpha_{2}+\alpha_{3}+\alpha_{4}
\end{equation*}
if and only if the minimal polynomial of $\alpha$ is an irreducible polynomial of the form
\begin{equation*}
x^{8} + ax^{6} + bx^{4} + cx^{2} + \frac{1}{64}(a^{2}-4b)^{2}\in\mathbb{Q}[x].
\end{equation*}
For every such $\alpha$, the Galois group of the normal closure of $\mathbb{Q}(\alpha)$ over $\mathbb{Q}$ is isomorphic to one of the groups, given 
in Table~\ref{table:t1}. Moreover, every such group appears as the Galois group for some $\alpha$ (see Table~\ref{table:t1}).
%
%
\end{theorem}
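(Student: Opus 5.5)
The proof splits into the two implications, followed by a separate analysis of Galois groups.

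For the easy direction ($\Leftarrow$) I would use the substitution $y=x^2$. Write the polynomial as $q(x^2)$ with $q(y)=y^4+ay^3+by^2+cy+e$ and $e=\frac1{64}(a^2-4b)^2$. The aim is an explicit parametrization of the roots in which the relation is visible. A natural guess is that the roots have the form $\pm\sqrt{u_1}\pm\sqrt{u_2}\pm\sqrt{u_3}$ with suitable sign constraints, so that $(\sqrt{u_1}+\sqrt{u_2}+\sqrt{u_3})=(\sqrt{u_1}+\sqrt{u_2}-\sqrt{u_3})+(\sqrt{u_1}-\sqrt{u_2}+\sqrt{u_3})+(-\sqrt{u_1}+\sqrt{u_2}+\sqrt{u_3})$; indeed the right side sums to $\sqrt{u_1}+\sqrt{u_2}+\sqrt{u_3}$. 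So I would take four roots $\beta_1,\dots,\beta_4$ of $q$, with $\beta_i=\gamma_i^2$ where $\gamma_1=s_1+s_2+s_3$, $\gamma_2=s_1+s_2-s_3$, and so on. I would then compute the elementary symmetric functions of the $\gamma_i^2$ in terms of $s_j^2$ and check that $\prod\gamma_i^2=\frac1{64}(a^2-4b)^2$ holds identically. This amounts to the classical fact that a quartic's roots are $\pm\sqrt{u_1}\pm\sqrt{u_2}\pm\sqrt{u_3}$ with $u_j$ roots of the resolvent cubic and a sign constraint on $\sqrt{u_1u_2u_3}$. Conversely, given $a,b,c$, I would run Ferrari's/Euler's method backwards: the condition on the constant term is precisely what makes the quartic $q(x^2)$ have its square roots sum as required. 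Since the roots of $p(x)=q(x^2)$ are $\pm\gamma_i$, and the $\gamma_i$ can be chosen as above, the relation $\gamma_1=\gamma_2+\gamma_3+\gamma_4$ holds among four distinct conjugates. Distinctness follows from irreducibility.

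For the hard direction ($\Rightarrow$), let $G$ act on the eight roots, with $\alpha_1=\alpha_2+\alpha_3+\alpha_4$. The step I expect to be the main obstacle is proving that $p$ is even. I would first use Kurbatov and the 2-transitivity result of Baron et al.: $G$ is a transitive subgroup of $S_8$ that is not 2-transitive (there are 50 transitive groups of degree 8). Using Girstmair's representation-theoretic framework, the relation vector $v=e_1-e_2-e_3-e_4$ must lie in the kernel of the $\mathbb{Q}G$-module map $\mathbb{Q}^8\to\mathbb{Q}(\alpha_1,\dots,\alpha_8)$. That kernel is a submodule not containing the trivial constituent unless the trace vanishes. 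I would run through the imprimitive groups by block systems. With blocks of size 2, applying block-swapping elements and combining relations should force a relation $\alpha_i+\alpha_j=0$, making $p$ even. Groups with blocks of size 4 only, and primitive non-2-transitive groups, should be excluded by showing the submodule generated by $v$ contains a vector incompatible with degree $8$ (for instance forcing $\alpha$ rational). This case check, probably done by computer on the transitive groups, is the real obstacle.

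Once $p(x)=q(x^2)$ is known, the relation involves at most one of each pair $\pm\gamma$, or else it reduces to a three-term relation. Three-term relations are excluded by the results for $n=3$, degree $8$ cited in the introduction (Dubickas--Jankauskas and Virbalas). So the relation takes the form $\gamma_1=\gamma_2+\gamma_3+\gamma_4$ with the $\gamma_i^2$ the four roots of $q$. Applying Galois elements that flip signs, together with the symmetry, I would derive $\gamma_1^2+\gamma_2^2+\gamma_3^2+\gamma_4^2$ and the product identity, and then eliminate to obtain $e=\frac1{64}(a^2-4b)^2$.

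Finally, for the Galois groups I would note that $G$ lies in the centralizer-type group of the $\pm$ structure compatible with the parametrization $\mathbb{Q}(s_1,s_2,s_3)$-like fields. That means determining which transitive groups of degree 8 occur as subgroups of the stabilizer of this configuration. For each group listed in Table~\ref{table:t1}, I would exhibit explicit $(a,b,c)$ and verify the group by computer.
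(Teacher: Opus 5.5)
\textbf{The gap: proving that $p$ is even.} Your converse and your passage from ``$p$ even'' to $d=\frac{1}{64}(a^2-4b)^2$ are sound. The Euler idea can be made explicit: put $P=a/2$, $R=(4b-a^2)/8$ and $Q^2=2PR-c$. Then $p(x)=(x^4+Px^2+R)^2-Q^2x^2=(x^4+Px^2+Qx+R)(x^4+Px^2-Qx+R)$. So the roots $\beta_i$ of the first factor sum to $0$, the $-\beta_i$ are the remaining roots, and $\beta_1=(-\beta_2)+(-\beta_3)+(-\beta_4)$. The paper uses a product identity over sign choices instead.

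The real problem is the step you flag yourself: showing that $p$ is even. What you offer there is a hope, not an argument. A block system with blocks of size $2$ does not by itself produce $\alpha_i+\alpha_j=0$. You also give no criterion for what makes a vector of $\mathbb{Q}G\,v$ ``incompatible with degree $8$''. The exclusions actually needed come from Lemma~\ref{lemma4} and a maximum-modulus argument, which are not statements about the module. Finally, every primitive group of degree $8$ is $2$-transitive, so your primitive case is empty and all the work sits in the imprimitive case. As written, the proposed computer check has no well-defined content.

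\textbf{The missing idea: the uniqueness lemma (Lemma~\ref{le316}).} The coefficient sum of $\alpha_1-\alpha_2-\alpha_3-\alpha_4$ is $-2\neq 0$, so Lemma~\ref{intro3} gives $tr(\alpha)=0$. Suppose $\alpha_1$ had a second relation. Add it to $\alpha_1=\alpha_2+\alpha_3+\alpha_4$ and subtract the trace. If the two triples share $0$ elements you get $\alpha_8=-3\alpha_1$, impossible by maximum modulus. If they share $1$ element you get $3\alpha_1-\alpha_2+\alpha_7+\alpha_8=0$, contradicting Lemma~\ref{lemma4}. If they share $2$ elements you get $\alpha_4=\alpha_5$.

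Hence each conjugate is the left side of exactly one relation. The $G$-orbit of the relation therefore has exactly $8$ members, and the stabilizer of $1$ fixes $\{2,3,4\}$ setwise. This is what makes the computer check finite and decisive. For every transitive $H\leq S_8$ with this property, a rank computation on the $8\times 8$ matrix of orbit relations shows that some relation $\alpha_1+\alpha_j=0$ lies in their $\mathbb{Q}$-span. Subgroups where this happens for two different $j$ are discarded, and the survivors are the groups of Table~\ref{table:t1}.

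Uniqueness would also make your vague last paragraph rigorous, giving the restriction on $G$ without a computer. Once $p$ is even, set $s_1=(\gamma_1-\gamma_4)/2$, $s_2=(\gamma_1-\gamma_3)/2$, $s_3=(\gamma_1-\gamma_2)/2$. The eight roots are then exactly $\pm s_1\pm s_2\pm s_3$, and each root equals the sum of its three neighbours in the cube graph on $\{\pm1\}^3$. Uniqueness forces every element of $G$ to preserve that graph, so $G\leq C_2\times S_4$. The transitive subgroups of this group are exactly the seven groups of Table~\ref{table:t1}. The realization part still needs the explicit examples.
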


\begin{table}[h!]
\centering
\begin{tabular}{ |c|c| }
\hline
Polynomial $p(x)$ & Galois group $G$\\
\hline
\hline
$x^8 + 6x^6 + 8x^4 + 3x^2 + 1/4$ &  $C_2\times C_2\times C_2$\\
$x^8 + 7x^4 + 8x^2 + 49/4$ & $C_2\times C_4$\\
$x^8 + x^4 + 4x^2 + 1/4$ &  $D_4$\\
$x^8 + 3x^4 + 4x^2 + 9/4$ & $C_2\times D_4$\\
$x^8 + 4x^6 + 4x^2 + 4$ &  $S_4$\\
$x^8 + 6x^4 + 8x^2 + 9$ &  $C_2\times A_4$\\
$x^8 + x^4 + 2x^2 + 1/4$ & $C_2\times S_4$\\
\hline
\end{tabular}
\vspace{0,2cm}
\caption{Minimal polynomials $p(x)$ of $\alpha$ from Theorem~\ref{t1} with the corresponding Galois groups $G$.}
\label{table:t1}
\end{table}

Another simple linear relation \eqref{eqlr1} for $n=4$ is of the form $\alpha_{1}+\alpha_{2}+\alpha_{3}+\alpha_{4}=0$. 
In \cite{Baronenas2026} we proved that an algebraic number $\alpha$ of degree $d\in\{4,5,6,7\}$ satisfies 
such linear relation if and only if $d=4$ and $tr(\alpha)=0$ or $d=6$ and the minimal polynomial 
of $\alpha$ is an irreducible even polynomial. Our second main result solves the Problem~\ref{prb1}  
in case when $n=4$, $a_1=\dotsc=a_4$ and $\deg(\alpha)=8$.

\begin{theorem}\label{t2}
Let $\alpha$ be an algebraic number of degree $d=8$. Denote by $p(x)$ the minimal polynomial of $\alpha$. Some four distinct algebraic conjugates of $\alpha$ satisfy the relation
\begin{equation}\label{eqrel4}
\alpha_{1}+\alpha_{2}+\alpha_{3}+\alpha_{4}=0
\end{equation}
if and only if $p(x)$ is an irreducible polynomial and $p(x)\in\{ p_1(x),p_2(x),$ $p_3(x), p_4(x)\}$, where
\begin{equation*}
\begin{split}
\quad p_1(x)=\;\,&x^{8} + 2ax^{6} + 2cx^{5} + (a^{2} - b^{2}t + 2e) x^{4} + 2(ac - bdt)x^{3}\\ 
                &+ (2(ae - bft) + c^{2} - d^{2}t)x^{2} + 2(ce - dft)x + (e^{2}-f^{2}t),\\[2mm]
\quad p_2(x)=\;\,&x^{8}-2(a^{2}t-b)x^{6}+2actx^{5}+( a^{4}t^2 -2a^{2}bt+b^{2}-c^{2}t+2d)x^{4}\\
                 &-2at(a^{2}ct-2e)x^{3}+( a^2c^2t^2-a^2b^2t+2a^2dt-2cet+2bd)x^{2}\\
                 &+2at(be-cd)x+d^{2}-e^{2}t,\\[2mm]
\quad p_3(x)=\;\,&x^{8} + (-4 a + 2 b) x^{6} + 2 c x^{5} + (6 a^{2} - 2 a b + b^{2} + 2 d) x^{4}\\ 
                  &+(4 a c + 2 b c) x^{3}+ (-4 a^{3} - 2 a^{2} b - 2 a b^{2} + 12 a d + c^{2} + 2 b d) x^{2}\\ 
                  &-2c(3a^{2} + a b  - d) x+ a^{4} + 2 a^{3} b + a^{2} b^{2} + 2 a^{2} d - a c^{2} + 2 a b d + d^{2},\\[2mm]
\quad p_4(x)=\;\,&x^{8}+ax^{6}+bx^{4}+cx^{2}+d,
\end{split}
\end{equation*}
here $a, b, c, d, e, f\in\mathbb{Q}$ and $t$ is a square-free integer.
\end{theorem}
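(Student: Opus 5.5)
We need to characterize when $\alpha_1+\alpha_2+\alpha_3+\alpha_4=0$ holds for degree 8.

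The plan follows the Galois-theoretic framework used in \cite{Baronenas2026} and \cite{DubickasJankauskas2015}. Let $G$ be the Galois group of the normal closure of $\mathbb{Q}(\alpha)$, viewed as a transitive subgroup of $S_8$ acting on the roots $\alpha_1,\dots,\alpha_8$. A relation \eqref{eqrel4} on a $4$-subset $S$ is transported by $G$ to relations on every set in the orbit $G\cdot S$. This gives a rational linear system $M\,(\alpha_1,\dots,\alpha_8)^T=0$, where the rows of $M$ are indicator vectors of the $4$-sets in $G\cdot S$.

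Since the conjugates of a degree $8$ number span a $\mathbb{Q}$-space on which $G$ acts, $M$ must have a kernel that contains a vector with pairwise distinct coordinates. Moreover, the $\alpha_i$ cannot satisfy any relation forcing two of them to coincide. So the first step is a finite computation. We run over the $50$ transitive groups of degree $8$ and all $4$-subsets $S$ up to the action of $G$. We discard the pairs $(G,S)$ for which the kernel of $M$ (together with the $G$-translates) forces $\alpha_i=\alpha_j$ for some $i\neq j$. This is the analogue of the computations in \cite{Baronenas2026}. Two cases are excluded by Kurbatov- or Baron-type arguments: a $2$-transitive $G$ admits only trivial relations, and $S$ cannot be a block complement giving $tr(\alpha)=0$ only. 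The survivors should split according to the block structure of $G$, and I expect four families.
\begin{enumerate}[(i)]
\item $G$ preserves a block system of four blocks of size $2$, with $\alpha=\beta+\gamma$, $\beta$ quadratic, and $S$ a union of two blocks whose "pair sums" cancel. This should lead to $p_1$: writing $\alpha=\beta+\sqrt t\,\delta$ with $\beta,\delta$ of degree $4$ gives $p(x)=N_{\mathbb{Q}(\sqrt t)}\bigl(q(x)\bigr)$ where $q(x)=x^4+ax^2+cx+e+\sqrt t(bx^2+dx+f)$. Expanding the norm reproduces $p_1$.
\item The same block structure with a different orbit type gives $p_2$, presumably with a coupled parametrization $\alpha=\sqrt t\,a+\dots$.
\item The imprimitive structure with two blocks of size $4$ gives $p_3$.
\item $G$ lies in the centralizer of the involution $\alpha\mapsto-\alpha$, so $S=\{\alpha_1,-\alpha_1,\alpha_2,-\alpha_2\}$. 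This gives the even polynomials $p_4$, and the converse is immediate in this case.
\end{enumerate}

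For each surviving case, we next determine the $\mathbb{Q}$-linear structure. The kernel of $M$ expresses the $\alpha_i$ as combinations of fewer "building blocks", such as $\alpha=\beta+\gamma$ with $\beta$ quadratic and $\gamma$ quartic, or $\alpha$ written in terms of $\sqrt t$. From this we compute the minimal polynomial symbolically as a resultant or norm. Matching its coefficients with general rational parameters gives the stated forms $p_1,\dots,p_4$. Conversely, for each $p_i$ we exhibit the factorization over a quadratic or quartic subfield. That factorization shows that the four appropriate roots sum to zero, which is the "if" direction, provided $p_i$ is irreducible of degree $8$.

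I expect the main obstacle to be the case analysis in the first step: exhaustively and rigorously eliminating group/orbit pairs, especially for large imprimitive groups like $C_2\wr S_4$ and $S_4\wr C_2$, where many orbits exist. The plan is to let the computer do this enumeration, as in \cite{Baronenas2026}, by checking for each pair whether the rational kernel meets the set of vectors with distinct coordinates. After that, the subtle point is the derivation of $p_3$: extracting a clean rational parametrization from the kernel in that case may require a clever change of variables, for instance shifting $x\mapsto x-a$ to exploit a quartic resolvent.
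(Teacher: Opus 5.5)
You have a genuine gap: the ``only if'' direction is never actually argued. You hand the whole classification to an enumeration over transitive groups whose outcome you only guess (``should'', ``presumably'', ``I expect''). The decisive step is also left unspecified: how to pass from a surviving pair $(G,S)$ to one of $p_1,\dots,p_4$. A rank computation on the orbit matrix $M$ only tells you which linear relations hold. It does not tell you which symmetric functions of which subsets of conjugates are quadratic irrationalities, and that is exactly what produces $p_1,p_2,p_3$.

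Several of your guessed structures are also wrong. For $p_1$ the relevant fact is that $S$ and its complement form a $G$-stable partition into two blocks of size $4$, which is the structure you assign to $p_3$. Then $\mathrm{Stab}_G(S)$ has index $2$, and the coefficients of $\prod_{i\in S}(x-\alpha_i)$ lie in its quadratic fixed field $\mathbb{Q}(\sqrt t)$. Your representation $\alpha=\beta+\sqrt t\,\delta$ with $\beta,\delta$ quartic presupposes a quartic subfield. A generic member of $p_1$ has no such subfield: its Galois group is $S_4\wr C_2$, whose only block system is $4+4$. The description ``$\alpha=\beta+\gamma$ with $\beta$ quadratic'' is the $p_3$ situation, not $p_1$. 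For $p_2$ you give no derivation at all.

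What you are missing is the combinatorial analysis of the set of \emph{all} relations. It makes a group enumeration unnecessary except in one case. By Lemma~\ref{intro3}, $tr(\alpha)=0$, so every relation has a complementary one. Adding or subtracting two distinct relations, using the trace, shows that their $4$-sets meet in $0$ or $2$ elements. A pair sum $\alpha_i+\alpha_j$ occurring in three relations forces $\alpha_i=-\alpha_j$. This gives $N\in\{2,4,6,8\}$ with explicit configurations, each exhibiting the needed quadratic quantity:
\begin{itemize}
\item For $N=2$, the $G$-stable partition $\{S,S^c\}$ does the job.
\item For $N=4$, take $\beta=\alpha_1+\alpha_2=\alpha_7+\alpha_8=-(\alpha_3+\alpha_4)$. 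It is quadratic: it is conjugate to $-\beta$, and $\beta=0$ would create a fifth relation. Comparing the coefficients $c_6,c_5,c_3,c_4$ then puts $\alpha_1\alpha_2+\alpha_7\alpha_8$ and $\alpha_1\alpha_2\alpha_7\alpha_8$ in $\mathbb{Q}(\sqrt t)$.
\item For $N=6$, one checks on the $16$ groups preserving the six relations that $\alpha_1-\alpha_8$ has only the conjugates $\pm(\alpha_1-\alpha_8)$, while $\alpha_1+\alpha_8$ has four. So $\alpha$ is a sum of a trace-zero quadratic and a trace-zero quartic, and $p(x)=R(x-\sqrt a)R(x+\sqrt a)$.
\item For $N=8$, $p$ is an even polynomial.
\end{itemize}

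Finally, for the converse with $p_2$ a factorization over $\mathbb{Q}(\sqrt t)$ is not enough. You must split each quartic factor into two quadratics with the same linear coefficient $\mp a\sqrt t$. Only then do you get two roots of one factor summing to $a\sqrt t$ and two of the other summing to $-a\sqrt t$.
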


Let $\alpha$ be an algebraic number of degree 8 which satisfies the linear relation \eqref{eqrel4}. 
Two such relations $\alpha_{i_1}+\alpha_{i_2}+\alpha_{i_3}+\alpha_{i_4}=0$ and $\alpha_{j_1}+\alpha_{j_2}+\alpha_{j_3}+\alpha_{j_4}=0$ 
are called distinct, if 
\[
\{\alpha_{i_1},\alpha_{i_2},\alpha_{i_3},\alpha_{i_4}\}\neq \{\alpha_{j_1},\alpha_{j_2},\alpha_{j_3},\alpha_{j_4}\}.
\]
In the proof of Theorem~\ref{t2}, we consider all possible distinct linear relations  $\alpha_{i_1}+\alpha_{i_2}+\alpha_{i_3}+\alpha_{i_4}=0$ for $\alpha$. 
Let $N$ be the number of such relations. 
We prove that $N=2,4,6$ or 8 and consider each case separately. 
Polynomials $p_1(x),p_2(x)$ and $p_3(x)$ (see Theorem~\ref{t2}) correspond to $N=2,4$ and 6, respectively. 
Moreover, it follows that if $N=6$ and the minimal polynomial $p(x)$ is a non-even polynomial, then  $\alpha$ 
is a sum of a quadratic and a quartic algebraic number (see Case $N=6$ in the proof of Theorem~\ref{t2}).

The paper is organized as follows. Some auxiliary results are given in Section~\ref{intro2} and the proofs of Theorem~\ref{t1} and \ref{t2} are provided in Section~\ref{prooft1} and \ref{prooft2}, respectively. The code of computations with SageMath~\cite{sagemath}, used in the proofs of Theorem~\ref{t1} and \ref{t2}, are given in the Appendix.

\section{Auxiliary results}\label{intro2}



Smyth \cite{Smyth1982} proved the following result, which is often used to investigate linear relations of conjugate algebraic numbers.

\begin{lemma}\label{intro7}
If $\alpha_{1}, \alpha_{2}, \alpha_{3}$ are three conjugates of an algebraic number satisfying $\alpha_{1}\neq\alpha_{2}$ then $2\alpha_{1}\neq \alpha_{2} + \alpha_{3}$.
\end{lemma}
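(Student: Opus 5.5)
The plan is to spread the relation over all conjugates with the Galois group, and then look at a conjugate of largest modulus, where strict convexity of the disc forbids a nontrivial midpoint relation.

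Suppose, for contradiction, that $2\alpha_1=\alpha_2+\alpha_3$ with $\alpha_1\neq\alpha_2$. Let $L$ be the normal closure of $\mathbb{Q}(\alpha_1)$ over $\mathbb{Q}$, let $G$ be its Galois group, and let $\beta_1,\dots,\beta_d$ be the full set of conjugates of $\alpha_1$. For every $i$ pick $\sigma\in G$ with $\sigma(\alpha_1)=\beta_i$. Applying $\sigma$ to the relation gives $2\beta_i=\sigma(\alpha_2)+\sigma(\alpha_3)$, where $\sigma(\alpha_2)$ and $\sigma(\alpha_3)$ are again conjugates. Since $\sigma$ is injective, $\sigma(\alpha_2)\neq\sigma(\alpha_1)=\beta_i$. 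So every conjugate $\beta_i$ can be written as $2\beta_i=\beta_j+\beta_k$ with $\beta_j\neq\beta_i$.

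Now fix, inside $\mathbb{C}$, an index $i$ for which $|\beta_i|$ is maximal among all conjugates, and take the corresponding relation $2\beta_i=\beta_j+\beta_k$ with $\beta_j\neq\beta_i$. The triangle inequality gives
\[
2|\beta_i|=|\beta_j+\beta_k|\leq|\beta_j|+|\beta_k|\leq 2|\beta_i|.
\]
Hence equality holds throughout, so $|\beta_j|=|\beta_k|=|\beta_i|$ and $\beta_j,\beta_k$ point in the same direction. Two complex numbers of equal modulus and equal argument coincide, so $\beta_j=\beta_k$. Then $2\beta_i=2\beta_j$, i.e.\ $\beta_j=\beta_i$, which contradicts $\beta_j\neq\beta_i$. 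The degenerate case $\beta_i=0$ cannot occur, since then all conjugates are $0$ and $\alpha_1=\alpha_2$.

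The only delicate step is the first one: we must ensure that the transported relation still has its middle term distinct from the two outer terms. This is exactly where injectivity of $\sigma$ is used. Everything else is the elementary observation that a point of maximal modulus in a finite set cannot be the midpoint of two other points of that set.
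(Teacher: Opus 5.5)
Your proof is correct; the paper gives no proof of this lemma to compare against, since it quotes the result from Smyth~\cite{Smyth1982}. Your argument is the standard one. You transport the relation by the Galois group to a conjugate of maximal modulus and then use the equality case of the triangle inequality. This is exactly the device the paper itself uses in the proof of Lemma~\ref{le316} to exclude $\alpha_8=-3\alpha_1$.

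Within the paper there is also a one-line alternative. Under the assumption $2\alpha_1=\alpha_2+\alpha_3$ with $\alpha_1\neq\alpha_2$, we get $\alpha_3\neq\alpha_1$ and $\alpha_3\neq\alpha_2$, since either equality would give $\alpha_1=\alpha_2$. So the three conjugates are distinct, and Lemma~\ref{lemma4} with $n=3$, $(k_1,k_2,k_3)=(2,-1,-1)$ and $K=\mathbb{Q}$ gives $2\alpha_1-\alpha_2-\alpha_3\notin\mathbb{Q}$, in particular $\neq 0$. That route, however, relies on a stronger cited result (non-membership in $K$ rather than mere non-vanishing). Your argument is self-contained and elementary.
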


Dubickas \cite{Dubickas2002} generalized Lemma $\ref{intro7}$ by proving the following 

\begin{lemma}\label{lemma4}
If $\beta_{1}, \beta_{2},\dots , \beta_{n}$, where $n\geq 3$, are distinct algebraic numbers conjugate over a field of characteristic zero $K$ and $k_{1}, k_{2},\dots ,k_{n}$ are non-zero rational numbers satisfying $|k_{1}| \geq |k_{2}|+\dots+|k_{n}|$ then
\begin{equation*}
k_{1}\beta_{1} + k_{2}\beta_{2} +\dots+ k_{n}\beta_{n}\notin K.
\end{equation*}
\end{lemma}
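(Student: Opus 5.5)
We argue by contradiction with an extremal (convexity) argument on the conjugates, after embedding everything into $\mathbb{C}$. Suppose that
\[
c:=k_{1}\beta_{1}+k_{2}\beta_{2}+\dots+k_{n}\beta_{n}\in K.
\]
Replacing all $k_i$ by $-k_i$ if necessary, we may assume $k_1>0$.

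\emph{Reduction to a subfield of $\mathbb{C}$.} Let $f\in K[x]$ be the minimal polynomial of $\beta_1$ over $K$, and let $K_0\subseteq K$ be the field generated over $\mathbb{Q}$ by $c$ and the coefficients of $f$. Since $f$ is irreducible over $K$, it is irreducible over $K_0$. Hence the $\beta_i$ remain distinct conjugates over $K_0$, and $c\in K_0$. The field $K_0$ is finitely generated over $\mathbb{Q}$, so it embeds into $\mathbb{C}$, and this embedding extends to the splitting field $L$ of $f$ over $K_0$. We may therefore assume $K_0\subset L\subset\mathbb{C}$. Let $G=\mathrm{Gal}(L/K_0)$, which acts transitively on the roots $\gamma_1,\dots,\gamma_d$ of $f$, and note that $\sigma(c)=c$ for every $\sigma\in G$.

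\emph{Choice of an injective real functional.} Choose $\theta\in\mathbb{R}$ such that
\[
R(z):=\mathrm{Re}(e^{i\theta}z)
\]
takes pairwise distinct values on the finite set $\{\gamma_1,\dots,\gamma_d\}$. Only finitely many directions $\theta$ fail this. Let $\gamma$ and $\gamma'$ be the unique roots at which $R$ attains its maximum $M$ and minimum $m$ on this set. Since $d\geq n\geq 3$, we have $M>m$. By transitivity, pick $\sigma,\sigma'\in G$ with $\sigma(\beta_1)=\gamma$ and $\sigma'(\beta_1)=\gamma'$. Applying $\sigma$ and $\sigma'$ to $c$ and then $R$, and subtracting, gives
\[
k_1(M-m)+\sum_{i=2}^{n}k_i\bigl(R(\sigma\beta_i)-R(\sigma'\beta_i)\bigr)=0 .
\]

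\emph{The inequality and the equality case.} Each difference $R(\sigma\beta_i)-R(\sigma'\beta_i)$ has absolute value at most $M-m$. Hence the displayed identity gives
\[
k_1(M-m)\leq \Bigl(\sum_{i\ge2}|k_i|\Bigr)(M-m).
\]
Combined with the hypothesis $k_1\geq\sum_{i\geq2}|k_i|$, this forces equality in every term. So for each $i\geq2$, $R(\sigma\beta_i)$ must equal $M$ or $m$, and by uniqueness $\sigma\beta_i\in\{\gamma,\gamma'\}$.

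Now $\sigma\beta_i\neq\sigma\beta_1=\gamma$, because the $\beta_j$ are distinct and $\sigma$ is injective. Therefore $\sigma\beta_i=\gamma'$ for every $i\geq2$. Since $n\geq3$, this gives $\sigma\beta_2=\sigma\beta_3$, contradicting the distinctness of $\beta_2$ and $\beta_3$. This contradiction shows $c\notin K$.

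The main obstacle is the equality case. The bare inequality would only give $k_1\leq\sum_{i\ge2}|k_i|$, which does not contradict the hypothesis $k_1\ge\sum_{i\ge2}|k_i|$. It is the choice of a direction $\theta$ making $R$ injective on the conjugates, so that the maximum and minimum are attained at single points, that turns equality into the collision $\sigma\beta_2=\sigma\beta_3$. This is exactly where $n\geq3$ is used. The reduction to $\mathbb{C}$ is routine but must keep $c$ and the irreducibility of $f$ over the smaller field $K_0$.
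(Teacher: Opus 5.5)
The paper does not prove this lemma; it imports it from Dubickas~\cite{Dubickas2002} (see also \cite[Theorem~3']{Dixon1997}). So there is no in-paper argument to compare yours against. Judged on its own, your proof is correct and complete.

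Subtracting the $\sigma$- and $\sigma'$-images of the relation cancels the unknown constant $c\in K$. That is what lets the extremal argument cover a non-zero value of the linear form, not just relations equal to $0$. Choosing $\theta$ so that $R$ is injective on the conjugates makes the extremal conjugates unique. The forced equality then collapses to $\sigma\beta_2=\sigma\beta_3$, which is exactly where $n\geq 3$ is used.

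One step deserves an explicit line. Write $D_i:=R(\sigma\beta_i)-R(\sigma'\beta_i)$. The chain of inequalities gives $\sum_{i\geq 2}|k_i|\,|D_i|=(M-m)\sum_{i\geq 2}|k_i|$ with $|D_i|\leq M-m$. You can conclude $|D_i|=M-m$ for every $i$ only because every $k_i$ is non-zero; this is where that hypothesis enters.

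Finally, the reduction to a subfield of $\mathbb{C}$ is needed only for an abstract field $K$. In the paper the lemma is applied with $K=\mathbb{Q}$ to complex conjugates, e.g.\ to $3\alpha_1-\alpha_2+\alpha_7+\alpha_8=0$ in the proof of Lemma~\ref{le316}. There you can take $L$ to be the splitting field inside $\mathbb{C}$ directly.
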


In the proof of Theorem~\ref{t1} we will need one more result, 
obtained by Dubickas and Jankauskas \cite{DubickasJankauskas2015}. 

\begin{lemma}\label{intro3}
The equality
\begin{equation*}
k_{1}\alpha_{1}+k_{2}\alpha_{2}+\cdots+k_{d}\alpha_{d} = 0
\end{equation*}
with conjugates $\alpha_{1}, \alpha_{2},\dots, \alpha_{d}$ of an algebraic number $\alpha$ of degree $d$ over $\mathbb{Q}$ and $k_{1}, k_{2},\dots, k_{d}\in\mathbb{Z}$ satisfying $\sum_{i=1}^{d} k_{i}\neq 0$ can only hold if $tr(\alpha) := \alpha_{1} + \alpha_{2} + \cdots + \alpha_{d} = 0$.
\end{lemma}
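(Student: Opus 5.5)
The idea is to average the given relation over the Galois group, so that each conjugate $\alpha_i$ is replaced by the same symmetric quantity, a rational multiple of $tr(\alpha)$.

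Let $L$ be the normal closure of $\mathbb{Q}(\alpha)$ over $\mathbb{Q}$ and $G=\mathrm{Gal}(L/\mathbb{Q})$. The group $G$ permutes the $d$ distinct roots of the minimal polynomial $p(x)$ of $\alpha$, and it acts transitively on them because $p(x)$ is irreducible. Every $\alpha_i$ in the relation is one of these roots. Note that the statement does not require the $\alpha_i$ to be distinct, and the argument below does not need this.

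First, apply every $\sigma\in G$ to the relation $k_1\alpha_1+\dots+k_d\alpha_d=0$ and sum over $\sigma$. Since the $k_i$ are rational, this gives
\[
\sum_{i=1}^{d} k_i \sum_{\sigma\in G}\sigma(\alpha_i)=0 .
\]

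Second, compute the inner sum. Fix $i$. By transitivity and the orbit--stabilizer theorem, the map $\sigma\mapsto\sigma(\alpha_i)$ takes each of the $d$ conjugates exactly $|G|/d$ times, namely once for each element of a coset of the stabilizer of $\alpha_i$. Hence
\[
\sum_{\sigma\in G}\sigma(\alpha_i)=\frac{|G|}{d}\,tr(\alpha),
\]
and this value does not depend on $i$.

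Substituting back yields $\frac{|G|}{d}\bigl(\sum_{i=1}^d k_i\bigr)\,tr(\alpha)=0$. Since $|G|/d\neq 0$ and, by hypothesis, $\sum_i k_i\neq 0$, we conclude $tr(\alpha)=0$.

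No step here is a real obstacle. The only point needing care is the uniform count in the second step, which is exactly where transitivity of $G$ on the roots, that is, irreducibility of $p(x)$, is used.
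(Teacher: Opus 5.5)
Your proof is correct. The paper does not prove this lemma: it quotes it from Dubickas and Jankauskas \cite{DubickasJankauskas2015} and uses it as a black box, for example to get $tr(\alpha)=0$ from $\alpha_1=\alpha_2+\alpha_3+\alpha_4$, where the coefficient sum is $-2$. So there is no argument in the paper to compare yours with. Your Galois-averaging argument is a valid self-contained proof. It can be stated more briefly as applying $\mathrm{Tr}_{L/\mathbb{Q}}$ to the relation, since $\mathrm{Tr}_{L/\mathbb{Q}}(\alpha_i)=[L:\mathbb{Q}(\alpha_i)]\,tr(\alpha)=\frac{|G|}{d}\,tr(\alpha)$ for every $i$.

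One correction to your side remark about distinctness. The statement defines $tr(\alpha)$ as $\alpha_1+\cdots+\alpha_d$, so the $\alpha_i$ are meant to be the $d$ distinct conjugates, with some $k_i$ possibly zero. If repetitions were allowed, the literal conclusion $\alpha_1+\cdots+\alpha_d=0$ could fail. For instance, take $\alpha=\sqrt2+\sqrt3$, $\alpha_1=\alpha_3=\alpha_4=\alpha$, $\alpha_2=-\alpha$ and $k=(1,1,0,0)$. In that generality, what your argument actually proves is that the genuine trace, the sum of the distinct conjugates, vanishes.
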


\section{Proof of Theorem~\ref{t1}}\label{prooft1}

\begin{proof}[Proof of Theorem~\ref{t1}]
Let $\alpha$ be an algebraic number of degree $d=8$ and let $\alpha_1=\alpha,\alpha_2,\dotsc, \alpha_8$ be the algebraic conjugates of $\alpha$. Assume that
\begin{equation}\label{eq1}
\alpha_{1}=\alpha_{2}+\alpha_{3}+\alpha_{4}.
\end{equation}
Throughout the proof, we consider only the relations $\alpha_i=\alpha_{i_1}+\alpha_{i_2}+\alpha_{i_3}$, where $\alpha_{i},\alpha_{i_1},\alpha_{i_2},\alpha_{i_3}$ are four distinct algebraic sonjugates of $\alpha$. 
We say that two relations $\alpha_i=\alpha_{i_1}+\alpha_{i_2}+\alpha_{i_3}$ and $\alpha_j=\alpha_{j_1}+\alpha_{j_2}+\alpha_{j_3}$ are distinct if $\{\alpha_{i_1},\alpha_{i_2},\alpha_{i_3}\}\neq \{\alpha_{j_1},\alpha_{j_2},\alpha_{j_3}\}$.

Lemma $\ref{intro3}$, in view or \eqref{eq1},  implies that $tr(\alpha)=\alpha_{1}+\alpha_{2}+\dotsb+\alpha_{8}=0$. 

\begin{lemma}\label{le316}
For every $i\in\{1,2,\dotsc,8\}$ there exists exactly one relation of the form 
$\alpha_i=\alpha_{i_1}+\alpha_{i_2}+\alpha_{i_3}$.
\end{lemma}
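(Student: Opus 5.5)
Existence comes from transitivity of the Galois group; the real work is uniqueness, which we derive from Lemma~\ref{lemma4}, Lemma~\ref{intro7} and $tr(\alpha)=0$.

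\emph{Existence.} Let $G$ be the Galois group of the normal closure of $\mathbb{Q}(\alpha)$. Since $G$ acts transitively on $\{\alpha_1,\dotsc,\alpha_8\}$, for each $i$ we pick $\sigma\in G$ with $\sigma(\alpha_1)=\alpha_i$. Applying $\sigma$ to \eqref{eq1} gives $\alpha_i=\sigma(\alpha_2)+\sigma(\alpha_3)+\sigma(\alpha_4)$, and these four conjugates are distinct. So every $\alpha_i$ has at least one relation.

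\emph{Uniqueness.} Suppose $\alpha_i$ has two distinct relations,
\[
\alpha_i=\beta_1+\beta_2+\beta_3=\gamma_1+\gamma_2+\gamma_3,\qquad \{\beta_1,\beta_2,\beta_3\}\neq\{\gamma_1,\gamma_2,\gamma_3\}.
\]
Subtracting gives $\sum\beta_j-\sum\gamma_j=0$. We split into cases by the size $s$ of $\{\beta_j\}\cap\{\gamma_j\}$, where $s\in\{0,1,2\}$.
\begin{itemize}
\item[$s=2$:] Cancellation gives $\beta=\gamma$ for the remaining single elements, so the two sets coincide, a contradiction.
\item[$s=1$:] We get $\beta_1+\beta_2=\gamma_1+\gamma_2$ with four distinct conjugates. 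This is not immediately contradictory. Substituting back, $\alpha_i=\beta_1+\beta_2+\beta_3$ with $\beta_3=\gamma_3$, and we must use more structure.
\item[$s=0$:] We have six distinct conjugates with $\beta_1+\beta_2+\beta_3=\gamma_1+\gamma_2+\gamma_3$, all different from $\alpha_i$. Together with $\alpha_i$ this accounts for seven of the eight conjugates. Using $tr(\alpha)=0$, the eighth conjugate $\delta$ equals $-\alpha_i-2\alpha_i=-3\alpha_i$. Then $\delta+3\alpha_i=0\in\mathbb{Q}$ with $|3|\geq|1|$, which is the two-term situation and is handled directly: $\delta=-3\alpha_i$ forces equal absolute norms up to $3^8$, which is impossible for nonzero conjugates. (Alternatively, apply $\sigma$ iteratively: $\alpha\mapsto -3\alpha$ under some automorphism of finite order forces $(-3)^k=1$, a contradiction.)
\end{itemize}

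\emph{Case $s=1$.} Here $\alpha_i=\beta_1+\beta_2+\beta_3=\gamma_1+\gamma_2+\beta_3$, which involves six distinct conjugates $\alpha_i,\beta_1,\beta_2,\gamma_1,\gamma_2,\beta_3$. Let the remaining two be $\delta_1,\delta_2$. The trace gives
\[
\delta_1+\delta_2=-\alpha_i-(\beta_1+\beta_2)-(\gamma_1+\gamma_2)-\beta_3=-\alpha_i-2(\alpha_i-\beta_3)-\beta_3=-3\alpha_i+\beta_3 .
\]
So $3\alpha_i+\delta_1+\delta_2-\beta_3=0$. This is an equality among four distinct conjugates with $|3|\geq 1+1+1$, and it contradicts Lemma~\ref{lemma4} with $K=\mathbb{Q}$ and $n=4$.

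\emph{Case $s=0$ via Lemma~\ref{lemma4}.} The same device settles this case uniformly: $\delta+3\alpha_i=0$ also contradicts Lemma~\ref{lemma4}. If one prefers to stay within the lemma's hypothesis $n\geq3$, note that then $-\delta/3=\alpha_i$ is a conjugate, and $\alpha_i=\beta_1+\beta_2+\beta_3$ gives $3\alpha_i+\beta_1+\beta_2+\beta_3+\delta-\alpha_i\ldots$. It is simpler to argue via norms: $N(\delta)=3^8N(\alpha_i)$ with $N(\delta)=N(\alpha_i)\neq0$, which is impossible.

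\emph{Main obstacle.} The main risk is the bookkeeping that the eight conjugates are exhausted correctly in each case. One must check that the $\gamma$'s and $\beta$'s are all distinct from $\alpha_i$, which holds by the definition of a relation.
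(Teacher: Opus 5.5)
Your proof is correct and follows essentially the paper's argument. You use the same case split on the overlap of the two triples, with overlap one yielding $3\alpha_i-\beta_3+\delta_1+\delta_2=0$ against Lemma~\ref{lemma4} and disjoint triples yielding $\delta=-3\alpha_i$. The paper refutes $\delta=-3\alpha_i$ with a maximum-modulus argument, while your norm argument $N(\delta)=3^8N(\alpha_i)=N(\alpha_i)\neq 0$ (or the finite-order iteration of $\sigma$) is equally valid. You should delete the aside claiming that $\delta+3\alpha_i=0$ contradicts Lemma~\ref{lemma4}, and the unfinished sentence after it, since that lemma requires $n\geq 3$ and the norm argument already settles the case.
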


\begin{proof}
It is sufficient to prove this claim for $i=1$, since if 
some $\alpha_i$ has at least two distinct relations, then, applying an automorphism $\phi\in G$ which sends $\alpha_i$ to $\alpha_1$, we obtain that there are at least two distinct relations for $\alpha_1$ as well. Indeed, assume that  we have one more relation $\alpha_{1} = \alpha_{k}+\alpha_{l}+\alpha_{m}$ which 
is distinct from \eqref{eq1}. If $\{\alpha_{k},\alpha_{l},\alpha_{m}\}\cap\{\alpha_{2},\alpha_{3},\alpha_{4}\} = \varnothing$, then, without loss of generality, say, $\alpha_{1}=\alpha_{5}+\alpha_{6}+\alpha_{7}$. 
By adding this relation with \eqref{eq1} and using $tr(\alpha)=0$, we derive $\alpha_{8}=-3\alpha_{1}$. 
This is impossible. Indeed, let $k\in\{1,2,\dotsc,8\}$ be such that $|\alpha_k|=m:=\max\{|\alpha_j|:j=1,2,\dotsc,8\}$ and 
select  an automorphism $\pi\in G$ which sends $\alpha_1$ to $\alpha_k$. 
Then $\pi$ maps $\alpha_{8}=-3\alpha_{1}$ to $\pi(\alpha_{8})=-3\alpha_{k}$. 
Hence, $3m=|-3\alpha_{k}| = |\pi(\alpha_{8})|\leq m$. A contradiction. 
If $|\{\alpha_{k},\alpha_{l},\alpha_{m}\}\cap\{\alpha_{2},\alpha_{3},\alpha_{4}\}| = 1$, then, without loss of generality, say, $\alpha_{1}=\alpha_{2}+\alpha_{5}+\alpha_{6}$. 
By adding this equation with \eqref{eq1} and using $tr(\alpha)=0$, we derive $3\alpha_{1}-\alpha_{2}+\alpha_{7}+\alpha_{8}=0$, which contradicts Lemma $\ref{lemma4}$. 
If $|\{\alpha_{k},\alpha_{l},\alpha_{m}\}\cap\{\alpha_{2},\alpha_{3},\alpha_{4}\}| = 2$, then, without loss of generality, say, $\alpha_{1}=\alpha_{2}+\alpha_{3}+\alpha_{5}$. 
But then, by subtracting this equation from \eqref{eq1}, we obtain $\alpha_{4}=\alpha_{5}$, which is impossible. 
\end{proof}

Let $G$ be the Galois group of the normal closure of $\mathbb{Q}(\alpha)$ over $\mathbb{Q}$. Note that this normal closure is also the splitting field of the minimal polynomial of $\alpha$ over $\mathbb{Q}$, and therefore $G$ is the Galois group of this polynomial. The group $G$ corresponds to some transitive subgroup of the full symmetric group $S_{8}$. Lemma~\ref{le316} implies

\begin{lemma}\label{le341}
There are exactly 8 distinct relations of the form 
$\alpha_i=\alpha_{i_1}+\alpha_{i_2}+\alpha_{i_3}$.
\end{lemma}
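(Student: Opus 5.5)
Lemma~\ref{le341} follows from Lemma~\ref{le316} by a counting argument: we assign to each relation its left-hand side and show this assignment is a bijection onto $\{\alpha_1,\dots,\alpha_8\}$.

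\emph{Definition of the map.} A relation $\alpha_i=\alpha_{i_1}+\alpha_{i_2}+\alpha_{i_3}$ is determined by its left-hand side together with the set $\{\alpha_{i_1},\alpha_{i_2},\alpha_{i_3}\}$. Send each relation to its left-hand side $\alpha_i$.

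\emph{Well-definedness.} We must check that two relations with the same right-hand set cannot have different left-hand sides. Suppose $\alpha_i=\alpha_{i_1}+\alpha_{i_2}+\alpha_{i_3}=\alpha_j$. Then $\alpha_i=\alpha_j$, since the left-hand side is simply the value of the sum. Hence, under the notion of distinctness used in the proof (distinct right-hand sets), the map is well defined, and the left-hand side is recovered from the right-hand set.

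\emph{Surjectivity.} Applying an automorphism $\sigma\in G$ to \eqref{eq1} yields another relation of the required form with four distinct conjugates. Since $G$ acts transitively on $\{\alpha_1,\dots,\alpha_8\}$, every $\alpha_i$ occurs as a left-hand side of at least one relation.

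\emph{Injectivity.} Lemma~\ref{le316} says that each $\alpha_i$ has exactly one relation of the form $\alpha_i=\alpha_{i_1}+\alpha_{i_2}+\alpha_{i_3}$. So no two distinct relations share the same left-hand side.

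Since the map is a bijection onto the eight conjugates, there are exactly $8$ distinct relations.

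The only subtle point is compatibility of notions. Distinctness of relations is defined through the right-hand sets, whereas the count is naturally organized by left-hand sides. The observation that equal right-hand sets force equal left-hand sides resolves this, because the left-hand side equals the sum of the right-hand set. There is no real obstacle beyond this.
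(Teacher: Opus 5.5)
Your proposal is correct and follows the paper's proof. The paper also uses transitivity of $G$ to get one relation with each $\alpha_i$ on the left-hand side, and Lemma~\ref{le316} to rule out any further ones. Your bijection additionally makes explicit why these eight relations are pairwise distinct (equal right-hand sets would force equal left-hand sides), which the paper leaves implicit.
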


\begin{proof}
By acting on \eqref{eq1} with appropriate automorphisms from $G$ we  obtain relations of the form \eqref{eq1} for every algebraic conjugate of $\alpha$:
\begin{equation}\label{eq2}
\begin{split}
\alpha_{1} &= \alpha_{2}\phantom{a} + \alpha_{3}\phantom{a} + \alpha_{4},\\
\alpha_{2} &= \alpha_{i_{21}} + \alpha_{i_{22}} + \alpha_{i_{23}},\\
   \dotsb            &=\;\;\; \dotsb\;\;\; \dotsb\;\;\;\dotsb\\
\alpha_{8} &= \alpha_{i_{81}}+\alpha_{i_{82}} + \alpha_{i_{83}}.
\end{split}
\end{equation}
We have obtained 8 distinct relations in \eqref{eq2}, and, by Lemma~\ref{le316}, these are all possible distinct relations of the form $\alpha_i=\alpha_{i_1}+\alpha_{i_2}+\alpha_{i_3}$.
\end{proof}

With every relation $\alpha_j=\alpha_{i_{j1}}+\alpha_{i_{j2}}+\alpha_{i_{j3}}$  in \eqref{eq2}, which is equivalent to $\alpha_j - \alpha_{i_{j1}} - \alpha_{i_{j2}} - \alpha_{i_{j3}}=0$, 
we associate the vector $v_j=(x_{j1},x_{j2},\dotsc,x_{j8})\in\mathbb{Q}^8$, defined as follows: 
$x_{j}=1$, $x_{i_{j1}}=x_{i_{j2}}=x_{i_{j3}}=-1$ and $x_k=0$ 
for all $k\in\{1,2,\dotsc,8\}\setminus\{j,i_{j1},i_{j2},i_{j3}\}$. 
For example, $v_1=(1,-1,-1,-1,0,0,0,0)$. 
Let $\mathcal{R}$ be the $8\times 8$ matrix whose rows are the vectors $v_1,v_2,\dotsc,v_8$. 
For any $j\in\{2,3,\dots, 8\}$, let $\mathcal{R}_j$ be the $9\times 8$ matrix, whose first 8 rows coincide with respective rows of $\mathcal{R}$ and the last row corresponds to the relation $\alpha_1+\alpha_j=0$, i.e., the last row of $\mathcal{R}_j$ is equal 
to the vector $(1,y_2,y_3,\dotsc,y_8)$, where $y_j=1$ and $y_k=0$ for all $k\in\{2,3,\dotsc, 8\}\setminus\{j\}$.  
Note that if the rank of $\mathcal{R}$ equals the rank of $\mathcal{R}_j$, then some linear combination of 
relations \eqref{eq2} yields the relation $\alpha_1+\alpha_j=0$.

Recall that $G$ is the Galois group of the normal closure of $\mathbb{Q}(\alpha)$ over $\mathbb{Q}$.  
We identify $G$ with the corresponding transitive subgroup of the full symmetric group $S_{8}$. 
So that any automorphism $\phi\in G$ acts on the indices of $\alpha_i$, i.e., 
$\phi(\alpha_i) = \alpha_{\phi(i)}$ for all $i\in\{1,2,\dotsc,8\}$. 

Let $H$ be a transitive subgroup of the symmetric group $S_8$. 
Consider the orbit (under $H$) of the relation $\alpha_1=\alpha_{2}+\alpha_{3}+\alpha_{4}$, which consists of the following relations:
\begin{equation}\label{eq371}
\alpha_{\tau(1)}=\alpha_{\tau(2)}+\alpha_{\tau(3)}+\alpha_{\tau(4)}\;\;\;\text{ for all }\tau\in H.
\end{equation}

If $H=G$, then, by Lemma~\ref{le341}, this orbit consists of exactly 8 distinct relations. 

Let $\mathcal{H}$ be the set of transitive subgroups $H$ of the full symmetric group $S_8$ for which the orbit 
\eqref{eq371} consists of exactly 8 distinct relations. 

We have carried out the following calculation on SageMath~\cite{sagemath} (the code of computations is provided in Listing~\ref{lst:1} in the Appendix). 
There are exactly 3922 subgroups in $\mathcal{H}$. 
For every $H\in\mathcal{H}$ and the corresponding 8 relations in \eqref{eq371} we have checked that 
there exists $j\in\{2,3,\dots, 8\}$ such that the rank of $\mathcal{R}$ equals the rank of $\mathcal{R}_j$. 
Since $G\in\mathcal{H}$, we obtain that there exists $j\in\{2,3,\dots, 8\}$ such that some linear combination of 
relations \eqref{eq2} yields the relation $\alpha_1+\alpha_j=0$. Thus, $\alpha_1=-\alpha_j$. 
Moreover, there are exactly 3538 subgroups $H\in\mathcal{H}$ for which there exist two distinct indices $i,j\in\{2,3,\dots, 8\}$ 
such that the ranks of the three matrices $\mathcal{R}$, $\mathcal{R}_i$ and $\mathcal{R}_j$ are equal. 
This yields two distinct relations $\alpha_1+\alpha_i=0$ and $\alpha_1+\alpha_j=0$ which imply $\alpha_i=\alpha_j$. 
 This is impossible. Hence, the Galois group $G$ is one of the remaining $3922-3538=384$ subgroups in $\mathcal{H}$, 
  each isomorphic to some group, given in Table~\ref{table:t1}.

Since $\alpha_1=-\alpha_j$ for some $j\in\{2,3,\dots, 8\}$, we obtain that the minimal polynomial $p(x)$ of $\alpha$ is an even polynomial, i.e., 
\begin{equation*}
p(x)=x^{8}+ax^{6}+bx^{4}+cx^{2}+d
\end{equation*}
for some $a,b,c,d\in\mathbb{Q}$. We will prove that $d=(a^{2}-4b)^{2}/64$. 
Indeed, let $x_{1}, x_{2}, x_{3}, x_{4}$ be the roots of the polynomial $x^{4}+ax^{3}+bx^{2}+cx+d$. 
Then $\pm\sqrt{x_{1}}, \pm\sqrt{x_{2}}, \pm\sqrt{x_{3}}, \pm\sqrt{x_{4}}$ are the roots of the polynomial $p(x)$. 
Recall that some four distinct roots of $p(x)$ satisfy \eqref{eq1}. 
Therefore, without loss of generality, we can assume that  
$\epsilon_{1}\sqrt{x_{1}}=\epsilon_{2}\sqrt{x_{2}}+\epsilon_{3}\sqrt{x_{3}}+\epsilon_{4}\sqrt{x_{4}}$, 
where $\epsilon_j=\pm 1$, $j=1,2,3,4$.
By squaring both sides of  $\epsilon_{1}\sqrt{x_{1}}-\epsilon_{2}\sqrt{x_{2}}=\epsilon_{3}\sqrt{x_{3}}+\epsilon_{4}\sqrt{x_{4}}$, we get
\begin{equation*}
x_{1}+x_{2}-x_{3}-x_{4}=2\left(\epsilon_{1}\epsilon_{2}\sqrt{x_{1}x_{2}}+\epsilon_{3}\epsilon_{4}\sqrt{x_{3}x_{4}}\right).
\end{equation*}
By squaring both sides again and doing some algebraic manipulations, we derive
\begin{equation*}
\left(\sum_{i=1}^{4}x_{i}\right)^{2}-4\sum_{1\leq i<j\leq 4}x_{i}x_{j}=8\epsilon_{1}\epsilon_{2}\epsilon_{3}\epsilon_{4}\sqrt{x_{1}x_{2}x_{3}x_{4}}.
\end{equation*}
Vieta's formulas imply that $a^{2}-4b = 8\epsilon_{1}\epsilon_{2}\epsilon_{3}\epsilon_{4}\sqrt{d}$. 
Again, by squaring both sides, we obtain $d=(a^{2}-4b)^{2}/64$. Hence, $p(x)$ is of the form, as given in the statement of the theorem.

Conversely, assume that
\[
p(x)=x^{8}+ax^{6}+bx^{4}+cx^{2}+\frac{1}{64}(a^2-4b)^2
\]
is an irreducible polynomial, where $a,b,c\in\mathbb{Q}$.  
Let $x_{1}, x_{2}, x_{3}, x_{4}$ be the roots of the polynomial $x^{4}+ax^{3}+bx^{2}+cx+(a^2-4b)^2/64$. Vieta's formulas yield 
\begin{equation}\label{eq418}
\sum_{i=1}^{4}x_{i}=-a,\;\;\;\;\sum_{1\leq i<j\leq 4}x_{i}x_{j}=b\;\;\text{ and }\;\; \prod_{i=1}^{4}x_{i} = \frac{1}{64}(a^2-4b)^2.
\end{equation}
Moreover, one can check (e.g., using SageMath~\cite{sagemath}) that the identity
\[
\prod_{\epsilon_{i}=\pm 1}(y_1+\epsilon_{2}y_2+\epsilon_{3}y_3+\epsilon_{4}y_4) = \left(\left(\sum_{i=1}^{4}y_{i}^2\right)^{2} - 4\sum_{1\leq i<j\leq 4}y_{i}^2y_{j}^2\right)^{2} - 64\prod_{i=1}^{4}y_{i}^2
\]
holds in the polynomial ring $\mathbb{Q}[y_1,y_2,y_3,y_4]$. Substituting $y_j=\sqrt{x_j}$, for $j=1,2,3,4$, into this identity, and using the expressions \eqref{eq418}, we obtain that
\begin{equation*}
\begin{split} 
&\prod_{\epsilon_{i}=\pm 1}(\sqrt{x_{1}}+\epsilon_{2}\sqrt{x_{2}}+\epsilon_{3}\sqrt{x_{3}}+\epsilon_{4}\sqrt{x_{4}})\\
=&\left(\left(\sum_{i=1}^{4}x_{i}\right)^{2} - 4\sum_{1\leq i<j\leq 4}x_{i}x_{j}\right)^{2} - 64\prod_{i=1}^{4}x_{i}\\
=&(a^2-4b)^2 - 64\cdot\frac{1}{64}(a^2-4b)^2=0.
\end{split}
\end{equation*}
Hence, for some $\epsilon_1,\epsilon_2,\epsilon_3\in\{\pm 1\}$ we have that $\sqrt{x_{1}}+\epsilon_{2}\sqrt{x_{2}}+\epsilon_{3}\sqrt{x_{3}}+\epsilon_{4}\sqrt{x_{4}}=0$. We can rewrite this as
\[
\sqrt{x_{1}}=-\epsilon_{2}\sqrt{x_{2}}-\epsilon_{3}\sqrt{x_{3}}-\epsilon_{4}\sqrt{x_{4}}.
\]
Moreover, $\pm\sqrt{x_{1}}, \pm\sqrt{x_{2}}, \pm\sqrt{x_{3}}, \pm\sqrt{x_{4}}$ are the roots of the polynomial $p(x)$. 
Finally, selecting $\alpha_1=\sqrt{x_{1}}$, $\alpha_2=-\epsilon_{2}\sqrt{x_{2}}$, $\alpha_3=-\epsilon_{3}\sqrt{x_{3}}$ and 
$\alpha_4=-\epsilon_{4}\sqrt{x_{4}}$, in view of the last equality, we obtain four distinct roots $\alpha_1,\alpha_2,\alpha_3,\alpha_4$ of $p(x)$ 
that satisfy the relation $\alpha_1=\alpha_2+\alpha_3+\alpha_4$. 
This completes the proof of Theorem~\ref{t1}.
\end{proof}

\section{Proof of Theorem~\ref{t2}}\label{prooft2}

\begin{proof}[Proof of Theorem~\ref{t2}]
Suppose that some four distinct algebraic conjugates of an algebraic number $\alpha$ of degree $8$ satisfy the relation 
\begin{equation}\label{eq49}
\alpha_{1}+\alpha_{2}+\alpha_{3}+\alpha_{4}=0,
\end{equation}
where $\alpha_1,\alpha_2,\dotsc,\alpha_8$ are the algebraic conjugates of $\alpha$. 
Throughout the proof of the theorem, we consider only the relations $\alpha_{i_1}+\alpha_{i_2}+\alpha_{i_3}+\alpha_{i_4}=0$, where 
$\alpha_{i_1},\alpha_{i_2},\alpha_{i_3},\alpha_{i_4}$ are four distinct algebraic conjugates. 
Two relations $\alpha_{i_1}+\alpha_{i_2}+\alpha_{i_3}+\alpha_{i_4}=0$ and $\alpha_{j_1}+\alpha_{j_2}+\alpha_{j_3}+\alpha_{j_4}=0$
are called distinct, if 
\[
\{\alpha_{i_1},\alpha_{i_2},\alpha_{i_3},\alpha_{i_4}\}\neq \{\alpha_{j_1},\alpha_{j_2},\alpha_{j_3},\alpha_{j_4}\}.
\] 
Let $N$ be the number of all possible distinct such relations, involving algebraic conjugates of $\alpha$. Lemma $\ref{intro3}$ implies that $tr(\alpha)=\alpha_{1}+\alpha_{2}+\dotsb+\alpha_{8}=0$. For any relation 
\begin{equation}\label{eq607}
\alpha_{i_1}+\alpha_{i_2}+\alpha_{i_3}+\alpha_{i_4}=0,
\end{equation} 
let 
\[
\{i_1',i_2',i_3',i_4'\}=\{1,2,\dotsc,8\}\setminus\{i_1,i_2,i_3,i_4\}.
\] 
Then, in view of $tr(\alpha)=0$, we have that  $\alpha_{i_1'}+\alpha_{i_2'}+\alpha_{i_3'}+\alpha_{i_4'}=0$. 
Note that the sets $\{\alpha_{i_1},\alpha_{i_2},\alpha_{i_3},\alpha_{i_4}\}$ and $\{\alpha_{i_1'},\alpha_{i_2'},\alpha_{i_3'},\alpha_{i_4'}\}$ are disjoint and every $\alpha_k$, $k\in\{1,2,\dotsc,8\}$, belongs to exactly one of these two sets. Therefore, the number $N$ is even, and every algebraic conjugate of $\alpha$ appears in exactly half of the relations. 
Moreover, the relation \eqref{eq49} implies that we have at least two distinct relations:
\begin{equation}\label{eq19}
\begin{split}
\alpha_{1} + \alpha_{2} + \alpha_{3} + \alpha_{4} &= 0,\\
\alpha_{5} + \alpha_{6} + \alpha_{7} + \alpha_{8} &= 0.
\end{split}
\end{equation}
Hence, $N\geq 2$. We will prove that $N=2, 4, 6,$ or $8$ and will consider each case separately.


First, we will prove several lemmas. 
\begin{lemma}\label{10}
If $\alpha_{i}+\alpha_{j}+\alpha_{k}+\alpha_{l}=0$ and $\alpha_{m}+\alpha_{n}+\alpha_{p}+\alpha_{r}=0$ are two distinct relations such that 
\begin{equation*}
\{\alpha_{i}, \alpha_{j}, \alpha_{k}, \alpha_{l}\}\cap\{\alpha_{m}, \alpha_{n}, \alpha_{p}, \alpha_{r}\}\neq\varnothing,
\end{equation*}
then $|\{\alpha_{i}, \alpha_{j}, \alpha_{k}, \alpha_{l}\}\cap\{\alpha_{m}, \alpha_{n}, \alpha_{p}, \alpha_{r}\}|=2$.
\end{lemma}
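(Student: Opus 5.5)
We have to rule out intersections of size $1$ and $3$; an intersection of size $4$ is excluded because the relations are distinct. The tools are the same as in Lemma~\ref{le316}: subtract the two relations, or use $tr(\alpha)=0$, and then apply Lemma~\ref{lemma4} or the maximal-modulus argument.

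\emph{Intersection of size $3$.} Suppose the two sets share three elements, say $\alpha_i,\alpha_j,\alpha_k$, with $\alpha_l\neq\alpha_r$ (after relabelling). Subtracting the relations gives $\alpha_l=\alpha_r$. This contradicts the distinctness of the conjugates.

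\emph{Intersection of size $1$.} Relabel so the relations are $\alpha_1+\alpha_2+\alpha_3+\alpha_4=0$ and $\alpha_1+\alpha_5+\alpha_6+\alpha_7=0$. These two sets involve seven distinct conjugates, so exactly one index, $8$, is missing. Adding the two relations gives
\[
2\alpha_1+\alpha_1+\alpha_2+\dots+\alpha_7=0 .
\]
Since $tr(\alpha)=0$, we have $\alpha_1+\dots+\alpha_7=-\alpha_8$, so the identity becomes $2\alpha_1-\alpha_8=0$, that is, $\alpha_8=2\alpha_1$.

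This is impossible by the argument used in Lemma~\ref{le316}. Choose $k$ with $|\alpha_k|=m:=\max_j|\alpha_j|$, and take $\pi\in G$ with $\pi(\alpha_1)=\alpha_k$. Then $\pi(\alpha_8)=2\alpha_k$, so $2m\leq m$, forcing $m=0$. But $\alpha\neq 0$, since $\alpha$ has degree $8$. Equivalently, $2\alpha_1-\alpha_8=0\in\mathbb{Q}$ contradicts Lemma~\ref{lemma4} with $n=2$; that lemma requires $n\geq 3$, however, so the modulus argument is the cleaner route.

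Hence the only possible size of a nonempty intersection is $2$. No step here is a real obstacle. The only point needing care is the size-$1$ case, where the trace identity must be used to turn the sum of the two relations into a two-term relation.
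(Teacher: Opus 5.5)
Your size-$3$ case is correct and is exactly the paper's argument. The size-$1$ case, however, contains an arithmetic error, and the derivation fails as written. Adding $\alpha_1+\alpha_2+\alpha_3+\alpha_4=0$ and $\alpha_1+\alpha_5+\alpha_6+\alpha_7=0$ gives $2\alpha_1+\alpha_2+\dots+\alpha_7=0$. This equals $\alpha_1+(\alpha_1+\alpha_2+\dots+\alpha_7)=0$, not $2\alpha_1+(\alpha_1+\alpha_2+\dots+\alpha_7)=0$ as you wrote; you counted $\alpha_1$ three times. With $tr(\alpha)=0$, i.e.\ $\alpha_1+\dots+\alpha_7=-\alpha_8$, the correct identity is $\alpha_1-\alpha_8=0$. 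So $\alpha_1=\alpha_8$, which contradicts the distinctness of the conjugates at once.

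Consequently you never actually obtain $\alpha_8=2\alpha_1$. Your maximal-modulus paragraph is a valid refutation of $\alpha_8=2\alpha_1$, but that equation does not follow from the hypotheses. Your discussion of whether Lemma~\ref{lemma4} applies with $n=2$ is likewise moot. Once the sum is computed correctly, no auxiliary argument is needed, and your proof coincides with the paper's: the size-$1$ case gives $\alpha_i=\alpha_s$ for some $s\neq i$, and the size-$3$ case gives $\alpha_l=\alpha_r$ by subtraction. You flagged the size-$1$ case as the one point needing care, and that is exactly where the slip occurred.
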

\begin{proof}
Denote $A:=\{\alpha_{i}, \alpha_{j}, \alpha_{k}, \alpha_{l}\}$ and $B:=\{\alpha_{m}, \alpha_{n}, \alpha_{p}, \alpha_{r}\}$. 
We have that $|A\cap B|<4$, since the two given relations are distinct. Hence, $|A\cap B|=1,2$ or 3. Assume that $|A\cap B|=1$. Without loss of generality, we can assume that $A\cap B=\{\alpha_i\}=\{\alpha_m\}$. 
Then, by adding the given two relations and subtracting $tr(\alpha)=0$, we obtain $\alpha_{i}=\alpha_{s}$, where $i\neq s$. A contradiction. On the other hand, assume that $|A\cap B|=3$. 
Without loss of generality, we can assume that $\alpha_{i}=\alpha_{m}, \alpha_{j}=\alpha_{n}, \alpha_{k}=\alpha_{p}$, and $\alpha_{l}\neq \alpha_{r}$. Then, by subtracting one relation from the other, we obtain $\alpha_{l}=\alpha_{r}$, which is impossible. Thus, the claim follows. 
\end{proof}

\begin{lemma}\label{11}
For any distinct  $i,j\in\{1,2,\dotsc,8\}$, the sum $\alpha_{i}+\alpha_{j}$ appears in at most three distinct relations of the form \eqref{eq607}. If $\alpha_{i}+\alpha_{j}$ appears in three distinct relations of the form \eqref{eq607}, then $\alpha_{i}=-\alpha_{j}$.
\end{lemma}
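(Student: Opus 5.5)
Fix distinct $i,j$ and suppose $\alpha_i+\alpha_j$ appears in several distinct relations of the form \eqref{eq607}. Each such relation has the form $\alpha_i+\alpha_j+\alpha_k+\alpha_l=0$, so it determines a pair $\{k,l\}\subset\{1,\dots,8\}\setminus\{i,j\}$ with $\alpha_k+\alpha_l=-(\alpha_i+\alpha_j)$. The plan is to bound the number of such pairs using Lemma~\ref{10} together with $tr(\alpha)=0$.

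\emph{Step 1: the pairs are disjoint.} Take two distinct relations $\alpha_i+\alpha_j+\alpha_k+\alpha_l=0$ and $\alpha_i+\alpha_j+\alpha_{k'}+\alpha_{l'}=0$. Their sets already share $\alpha_i,\alpha_j$, so by Lemma~\ref{10} the intersection has size exactly $2$, and hence $\{k,l\}\cap\{k',l'\}=\varnothing$. (The same also follows directly: a common index, say $k=k'$, would give $\alpha_l=\alpha_{l'}$ on subtracting.) So the pairs attached to $\alpha_i+\alpha_j$ are pairwise disjoint subsets of a $6$-element set, and there are at most three of them. This proves the first claim.

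\emph{Step 2: the case of three relations.} Suppose there are exactly three, with pairs $\{k_1,l_1\},\{k_2,l_2\},\{k_3,l_3\}$. These partition $\{1,\dots,8\}\setminus\{i,j\}$. Each pair sums to $-(\alpha_i+\alpha_j)$, so adding all three gives
\[
\sum_{m\neq i,j}\alpha_m=-3(\alpha_i+\alpha_j).
\]
But $tr(\alpha)=0$ gives $\sum_{m\neq i,j}\alpha_m=-(\alpha_i+\alpha_j)$. Comparing the two, $2(\alpha_i+\alpha_j)=0$, so $\alpha_i=-\alpha_j$.

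Neither step presents a real obstacle. The only point needing care is applying Lemma~\ref{10} in Step~1 to get disjointness of the complementary pairs; after that the argument is a count followed by a trace identity.
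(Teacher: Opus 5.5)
Your proposal is correct and follows essentially the same route as the paper. Lemma~\ref{10} forces the complementary pairs attached to $\alpha_i+\alpha_j$ to be pairwise disjoint, so at most three fit among the remaining six indices. Summing three such relations and comparing with $tr(\alpha)=0$ then gives $\alpha_i=-\alpha_j$.
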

\begin{proof}
First, assume that $\alpha_{i}+\alpha_{j}$ appears in exactly three distinct relations of the form \eqref{eq607}. In view of Lemma $\ref{10}$, these three relations are of the following form:
\begin{equation}\label{eq60}
\begin{split}
\alpha_{i} + \alpha_{j} + \alpha_{k_{1}} + \alpha_{k_{2}} &= 0,\\
\alpha_{i} + \alpha_{j} + \alpha_{k_{3}} + \alpha_{k_{4}} &= 0,\\
\alpha_{i} + \alpha_{j} + \alpha_{k_{5}} + \alpha_{k_{6}} &= 0,
\end{split}
\end{equation}
where $i,j,k_{1},k_{2},\dots,k_{6}$ are distinct. By adding all these relations and subtracting $tr(\alpha)=0$, 
we derive $\alpha_{i}=-\alpha_{j}$.
 
Now, assume for a contradiction that $\alpha_{i}+\alpha_{j}$ appears in at least four distinct relations of the form \eqref{eq607}. 
In view of Lemma $\ref{10}$, we have the relations \eqref{eq60} and a fourth relation $\alpha_{i} + \alpha_{j} + \alpha_{k_{7}} + \alpha_{k_{8}} = 0$, where $i,j,k_{1},k_{2},\dots,k_{8}$ are distinct. But this is impossible, since $\deg(\alpha)=8$. 
\end{proof} 

\begin{lemma}\label{13}
$N\leq 8$.
\end{lemma}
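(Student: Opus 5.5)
The plan is a double counting argument centred on a single conjugate, say $\alpha_1$, combined with Lemma~\ref{11}, and then a trace argument to exclude the single borderline case. As observed before Lemma~\ref{10}, every conjugate appears in exactly $N/2$ of the $N$ distinct relations of the form \eqref{eq607}. So $\alpha_1$ appears in exactly $N/2$ relations, and it suffices to show $N/2\leq 4$.

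\emph{Step 1 (counting).} For $j\in\{2,\dots,8\}$ let $c_j$ be the number of relations containing both $\alpha_1$ and $\alpha_j$. Each relation containing $\alpha_1$ contributes exactly three pairs $\{1,j\}$, so
\[
3\cdot\frac{N}{2}=\sum_{j=2}^{8}c_j .
\]
By Lemma~\ref{11}, $c_j\leq 3$, and $c_j=3$ forces $\alpha_j=-\alpha_1$. This can happen for at most one index $j$, since two such indices $j\neq j'$ would give $\alpha_j=\alpha_{j'}$. All other $c_j$ are at most $2$. Hence $\sum_j c_j\leq 3+6\cdot 2=15$, which gives $N/2\leq 5$. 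Moreover, $N/2=5$ forces equality: there is some $j_0$ with $c_{j_0}=3$, and $c_j=2$ for all $j\neq j_0$.

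\emph{Step 2 (excluding $N/2=5$).} In this extremal case, I add the five relations containing $\alpha_1$. This gives
\[
5\alpha_1+\sum_{j\geq 2}c_j\alpha_j=0 .
\]
Subtracting $2\,tr(\alpha)=0$ leaves $3\alpha_1+\alpha_{j_0}=0$, that is, $\alpha_{j_0}=-3\alpha_1$. This is impossible by the maximal-modulus argument already used in the proof of Lemma~\ref{le316}. Choose $k$ with $|\alpha_k|$ maximal among the conjugates, and take an automorphism $\pi\in G$ with $\pi(\alpha_1)=\alpha_k$. Then $|\pi(\alpha_{j_0})|=3|\alpha_k|$, which exceeds the maximum modulus. (It also contradicts Lemma~\ref{11}'s consequence $\alpha_{j_0}=-\alpha_1$, since $\alpha_1\neq 0$.) Therefore $N/2\leq 4$, i.e.\ $N\leq 8$.

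The only delicate point is making sure the counting in Step~1 is tight enough that the case $N/2=5$ is pinned down exactly, so that the summed relation collapses to a two-term relation. The equality analysis shows it does: all $c_j$ are determined, and subtracting twice the trace kills every coefficient except those of $\alpha_1$ and $\alpha_{j_0}$. I expect no further obstacle.
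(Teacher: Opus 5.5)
Your proof is correct, but it takes a different route from the paper's.

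\textbf{The paper's route.} It fixes the complementary pair \eqref{eq19} and applies Lemma~\ref{10} to see that every further relation through $\alpha_1$ contains exactly one of $\alpha_2,\alpha_3,\alpha_4$ and two of $\alpha_5,\dots,\alpha_8$. It then pigeonholes four such relations to find, say, $\alpha_1+\alpha_2$ occurring three times, so $\alpha_1=-\alpha_2$. A short case analysis follows. The two remaining relations must use $\alpha_1+\alpha_3$ and $\alpha_1+\alpha_4$, and by Lemma~\ref{10} they share some $\alpha_m$ with $m\in\{5,6,7,8\}$. Then $\alpha_1+\alpha_m$ also occurs three times, which gives $\alpha_2=\alpha_m$.

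\textbf{Your route.} You instead double count the pairs $\{1,j\}$ over all $N/2$ relations through $\alpha_1$. Lemma~\ref{11} alone, together with the fact that at most one $j$ can satisfy $\alpha_j=-\alpha_1$, gives $3N/2=\sum_j c_j\le 15$. The equality case pins down every $c_j$. Summing the five relations and subtracting $2\,tr(\alpha)$ then collapses to $\alpha_{j_0}=-3\alpha_1$, which contradicts $\alpha_{j_0}=-\alpha_1$.

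\textbf{Comparison.} Your version is shorter and uniform. It needs no WLOG normalisation of the relations and no direct appeal to Lemma~\ref{10} (only its use inside Lemma~\ref{11}). It is also the same device the paper uses in Lemma~\ref{12}: add the relations through $\alpha_1$ and subtract the trace. The paper's version works purely with the intersection pattern of the relations and needs no summation beyond what is already inside Lemmas~\ref{10} and~\ref{11}. In exchange, it needs the case analysis and only examines four of the relations through $\alpha_1$.
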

\begin{proof}
Assume for a contradiction that $N\geq10$. 
Without loss of generality, we also assume both equations in \eqref{eq19}. Note that $\alpha_{1}$ must appear in at least five of these equations (it already appears once in \eqref{eq19}). In view of \eqref{eq19} and Lemma $\ref{10}$, for the other four equations (having $\alpha_1$), we must use the sums: $\alpha_{1}+\alpha_{2}$, $\alpha_{1}+\alpha_{3}$, or $\alpha_{1}+\alpha_{4}$. The Pigeonhole principle and Lemma $\ref{11}$ imply that one of these sums must appear exactly twice in these four equations. Without loss of generality, say that it is $\alpha_{1}+\alpha_{2}$:
\begin{equation*}
\begin{split}
\alpha_{1} + \alpha_{2} + \alpha_{i} + \alpha_{j} &= 0,\\
\alpha_{1} + \alpha_{2} + \alpha_{k} + \alpha_{l} &= 0,
\end{split}
\end{equation*}
where $\alpha_{i}, \alpha_{j}, \alpha_{k}, \alpha_{l}\in\{\alpha_{5}, \alpha_{6}, \alpha_{7}, \alpha_{8}\}$ are distinct. 
Then $\alpha_{1}=-\alpha_{2}$, by Lemma $\ref{11}$. 
Note that if $\alpha_{1}+\alpha_{3}$ (or $\alpha_{1}+\alpha_{4}$) appears twice in two other equations, 
then $\alpha_{1}=-\alpha_{3}$ (or respectively $\alpha_{1}=-\alpha_{4}$), which implies that $\alpha_{2}=\alpha_{3}$ (or respectively $\alpha_{2}=\alpha_{4}$). 
A contradiction. 
Thus, the only remaining possibility is the following:
\begin{equation*}
\begin{split}
\alpha_{1} + \alpha_{3} + \alpha_{m} + \alpha_{n} &= 0,\\
\alpha_{1} + \alpha_{4} + \alpha_{m} + \alpha_{p} &= 0,
\end{split}
\end{equation*}
where $\alpha_{m}, \alpha_{n}, \alpha_{p}\in\{\alpha_{5}, \alpha_{6}, \alpha_{7}, \alpha_{8}\}$ are all distinct. But then, $\alpha_{1}+\alpha_{m}$ appears in exactly three distinct equations. This means that $\alpha_{1}=-\alpha_{m}$, by Lemma $\ref{11}$. Thus, $\alpha_{2}=\alpha_{m}$, a contradiction.  
\end{proof}

\begin{lemma}\label{12}
If $N=8$, then $\alpha_{i}+\alpha_{j}$ for some $i\neq j$, must appear in exactly three distinct relations of the form \eqref{eq607}.  
\end{lemma}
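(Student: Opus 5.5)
Throughout, let $N=8$. Since each conjugate appears in exactly half of the relations, $\alpha_1$ appears in exactly four relations of the form \eqref{eq607}. Without loss of generality one of them is $\alpha_1+\alpha_2+\alpha_3+\alpha_4=0$ from \eqref{eq19}. The plan is to use Lemma~\ref{10} to pin down the shape of the other three relations containing $\alpha_1$. Then either a pair sum occurring three times appears directly, or the configuration is forced into a rigid pattern that leads to a contradiction.

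\emph{Step 1.} Each of the other three relations through $\alpha_1$ meets $\{\alpha_1,\alpha_2,\alpha_3,\alpha_4\}$ in exactly two elements, by Lemma~\ref{10}. So it has the form $\alpha_1+\alpha_s+\alpha_u+\alpha_v=0$ with $s\in\{2,3,4\}$ and $\{u,v\}\subset\{5,6,7,8\}$. If some $s$ occurs in two of these three relations, then $\alpha_1+\alpha_s$ occurs in three relations, counting the first one, and we are done.

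\emph{Step 2.} Otherwise $s$ takes each value $2,3,4$ once, giving relations
\[
\alpha_1+\alpha_2+A=0,\quad \alpha_1+\alpha_3+B=0,\quad \alpha_1+\alpha_4+C=0,
\]
where $A,B,C$ are sums over $2$-subsets $S_A,S_B,S_C$ of $\{5,6,7,8\}$. Any two of these relations already share $\alpha_1$. Hence, by Lemma~\ref{10}, any two of the sets $S_A,S_B,S_C$ meet in exactly one index. Three $2$-subsets of a $4$-set that pairwise meet in exactly one element form either a ``star'', all containing a common index $m$, or a ``triangle'' such as $\{5,6\},\{6,7\},\{5,7\}$.

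In the star case, $\alpha_1+\alpha_m$ appears in three relations, and we are done.

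\emph{Step 3.} In the triangle case, say $S_A=\{5,6\}$, $S_B=\{6,7\}$, $S_C=\{5,7\}$, we add the three relations and get
\[
3\alpha_1+(\alpha_2+\alpha_3+\alpha_4)+2(\alpha_5+\alpha_6+\alpha_7)=0.
\]
Using $\alpha_2+\alpha_3+\alpha_4=-\alpha_1$ and $\alpha_5+\alpha_6+\alpha_7=-\alpha_8$ (from \eqref{eq19}), this becomes $2\alpha_1-2\alpha_8=0$, so $\alpha_1=\alpha_8$. This is a contradiction, so the triangle case cannot occur and the lemma follows.

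The main obstacle is Step 3. Checking pair sums alone does not exclude the triangle configuration: in the resulting eight relations no pair sum appears three times. So a genuine linear-algebra contradiction is needed, and summing the three relations through $\alpha_1$ and using \eqref{eq19} provides it.
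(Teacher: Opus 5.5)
Your proof is correct and follows essentially the same route as the paper. Lemma~\ref{10} forces the three further relations through $\alpha_1$ into the ``triangle'' pattern on $\{5,6,7,8\}$, and summing relations gives $\alpha_1=\alpha_8$; the only differences are cosmetic, since the paper argues by contradiction (so your star case is excluded by hypothesis) and adds all four relations through $\alpha_1$ using $tr(\alpha)=0$ rather than adding three and invoking \eqref{eq19}.
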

\begin{proof}
Lemma $\ref{11}$ implies that $\alpha_{i}+\alpha_{j}$ cannot appear in more than three distinct relations of the form \eqref{eq607}. 
Thus, assume for a contradiction that any sum of two distinct conjugates of $\alpha$ appears in at most two distinct relations of the form \eqref{eq607}. 
Without loss of generality, we assume both relations in \eqref{eq19}. 
Since $N=8$, $\alpha_{1}$ must appear in exactly four of these relations (it already appears once in \eqref{eq19}). 
Thus, in view of our assumption and Lemma $\ref{10}$, for the other three equations, 
we must use the sums: $\alpha_{1}+\alpha_{2}$, $\alpha_{1}+\alpha_{3}$, or $\alpha_{1}+\alpha_{4}$ exactly once for each relation. 
Hence, the four distinct relations, involving $\alpha_1$, are
\begin{equation}\label{eq63}
\begin{split}
\alpha_{1} + \alpha_{2} + \alpha_{3} + \alpha_{4}              &= 0,\\
\alpha_{1} + \alpha_{2} + \alpha_{k_{1}} + \alpha_{k_{2}} &= 0,\\
\alpha_{1} + \alpha_{3} + \alpha_{k_{1}} + \alpha_{k_{3}} &= 0,\\
\alpha_{1} + \alpha_{4} + \alpha_{k_{2}} + \alpha_{k_{3}} &= 0,
\end{split}
\end{equation}
where $\alpha_{k_{1}}, \alpha_{k_{2}}, \alpha_{k_{3}}\in\{\alpha_{5}, \alpha_{6}, \alpha_{7}, \alpha_{8}\}$ are distinct. But then, by adding all the relations in \eqref{eq63}, we obtain 
\begin{equation*}
4\alpha_{1} + 2\alpha_{2} + 2\alpha_{3} + 2\alpha_{4} + 2\alpha_{k_{1}} + 2\alpha_{k_{2}} + 2\alpha_{k_{3}} = 0. 
\end{equation*}
Since $tr(\alpha)=0$, we derive $\alpha_{1}=\alpha_{k_{4}}$, where $\alpha_{k_{4}}\in\{\alpha_{5}, \alpha_{6}, \alpha_{7}, \alpha_{8}\}$. A contradiction. Thus, the claim follows.
\end{proof}

Recall that $N$ is an even positive integer and $N\geq 2$. 
By Lemma~\ref{13}, we have four cases to consider: $N=2,4,6$ or 8. 

\vspace{0,3cm}

\underline{Case $N=8$}. 
Lemma~\ref{11} together with Lemma~\ref{12} imply that  $\alpha_{i}=-\alpha_{j}$ for some  $i\neq j$. 
Hence, the minimal polynomial $p(x)$ of $\al$ is of the form $p_4(x)$ (an even polynomial), given in of Theorem~\ref{t2}. 
Conversely, let $p(x)=p_4(x)$ be an irreducible polynomial, given in Theorem~\ref{t2}.  
Since $p(x)$ is even, we can easily select four distinct roots of $p(x)$ that sum to zero (note that the 
roots of an even polynomial $p(x)$ come in pairs $(\alpha,-\alpha)$, so that selecting two distinct 
pairs $(\alpha,-\alpha)$ and $(\alpha',-\alpha')$, where $\alpha\neq\pm\alpha'$, we obtain four distinct roots of $p(x)$ that sum to zero). 

\begin{remark}\label{evenpol}
From now on, we assume  that the minimal polynomial of $\alpha$ is not an even polynomial, since we already proved that in such a case, there exists a relation of 
the form \eqref{eq607} (see Case $N=8$).    
Note that this assumption is equivalent to $\alpha_i\neq -\alpha_j$ for any two distinct $i$ and $j$.
\end{remark}

\vspace{0,3cm}

\underline{Case $N=2$}. Without loss of generality, we assume the relations in \eqref{eq19}. 
Since $N=2$, there are no other relations of the form \eqref{eq607} apart from those in \eqref{eq19}. 
Take two polynomials 
\begin{equation*}
p_{1}(x) := (x-\alpha_{1})(x-\alpha_{2})(x-\alpha_{3})(x-\alpha_{4}) = x^{4} + ux^{2} + vx + w,
\end{equation*}
\begin{equation*}
p_{2}(x) := (x-\alpha_{5})(x-\alpha_{6})(x-\alpha_{7})(x-\alpha_{8}) = x^{4} + u'x^{2} + v'x + w'.
\end{equation*}
Their product is the minimal polynomial of $\alpha$:
\begin{equation}\label{eq745}
p(x) = (x^{4} + ux^{2} + vx + w)(x^{4} + u'x^{2} + v'x + w')
\end{equation}
that expands to
\begin{equation}\label{eq22}
\begin{split}
p(x) &= x^{8} + (u+u')x^{6} + (v+v')x^{5} + (uu'+w+w')x^{4} + (uv'+vu')x^{3}\\
&+ (uw'+vv'+wu')x^{2} + (vw'+wv')x + ww'.                   
\end{split}
\end{equation}

Denote $\mathcal{S}_{1}:=\{\alpha_{1}, \alpha_{2}, \alpha_{3}, \alpha_{4}\}$ and $\mathcal{S}_{2}:=\{\alpha_{5}, \alpha_{6}, \alpha_{7}, \alpha_{8}\}$. 
Let $G$ be the Galois group of the normal closure of $\mathbb{Q}(\alpha_{1})$ over $\mathbb{Q}$. The group $G$ corresponds to some transitive subgroup of the full symmetric group $S_{8}$. Take any automorphism $\phi\in G$. Note that either $\phi(\mathcal{S}_{1})=\mathcal{S}_{1}$ or $\phi(\mathcal{S}_{1})=\mathcal{S}_{2}$. Since otherwise, by acting on $\alpha_{1} + \alpha_{2} + \alpha_{3} + \alpha_{4} = 0$ with $\phi$, we would obtain a relation $\alpha_{i_{1}} + \alpha_{i_{2}} + \alpha_{i_{3}} + \alpha_{i_{4}} = 0$ that does not exist. Therefore, 
\begin{equation}\label{eq768}
\phi(\mathcal{S}_{1})=\mathcal{S}_{1}\; \;\text{and}\;\; \phi(\mathcal{S}_{2})=\mathcal{S}_{2}\;\;\; \text{or}\; \;\;
\phi(\mathcal{S}_{1})=\mathcal{S}_{2}\;\; \text{and}\;\; \phi(\mathcal{S}_{2})=\mathcal{S}_{1}.
\end{equation} 
Let $s_4(x_1,x_2,x_3,x_4) = x_1x_2x_3x_4$ be an elementary symmetric polynomial. Note that $w=s_4(\alpha_{1}, \alpha_{2}, \alpha_{3}, \alpha_{4})$ and $w'=s_4(\alpha_{5}, \alpha_{6}, \alpha_{7}, \alpha_{8})$. For any automorphism $\phi\in G$, in view of \eqref{eq768}, we have that 
$\phi(w)=w$ or $\phi(w)=w'$. Also, selecting $\phi\in G$ which sends $\alpha_1$ to $\alpha_5$, we obtain $\phi(w)=w'$. 
So that $w$ and $w'$ are algebraic conjugates of degree at most 2. Similarly, we get that $v$ and $v'$ are algebraic conjugates of degree at most 2, and $u$ and $u'$ are algebraic conjugates of degree at most 2. On the other hand, numbers $u, v$ and $w$ can't all be rational, since then the polynomial $p_1(x)$ would have rational coefficients,  contradicting the irreducibility of $p(x)$. 
Hence, at least one of the numbers $u, v$ and $w$ is a quadratic algebraic number. 

We will prove that all the numbers $u,v$ and $w$ belong to the same quadratic field. For this, we need the following.

\begin{lemma}\label{le779}
Suppose that $y,y',z$ and $z'$ are complex numbers, such that all three numbers $y+y'$, $z+z'$ and $yz'+y'z$ are rational. 
If $y\notin\mathbb{Q}$, then $z\in\mathbb{Q}(y)$.
\end{lemma}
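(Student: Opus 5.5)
Set $r:=y+y'$, $s:=z+z'$ and $q:=yz'+y'z$; by hypothesis $r,s,q\in\mathbb{Q}$. The plan is to eliminate the primed variables using the first two relations, substitute into the third, and solve the resulting equation linearly for $z$. The coefficient of $z$ will turn out to be $r-2y$, and the hypothesis $y\notin\mathbb{Q}$ is exactly what makes it nonzero.

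First, write $y'=r-y$ and $z'=s-z$. Substituting these into the third quantity gives
\[
q = y(s-z) + (r-y)z = ys + z(r-2y).
\]
Rearranging yields $z(r-2y) = q - sy$.

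Next, check that $r-2y\neq 0$. If $r-2y=0$, then $y=r/2\in\mathbb{Q}$, contradicting the assumption $y\notin\mathbb{Q}$. Hence we may divide:
\[
z = \frac{q-sy}{r-2y}.
\]
The right-hand side is a rational function of $y$ with rational coefficients and nonzero denominator, so it lies in $\mathbb{Q}(y)$. This gives $z\in\mathbb{Q}(y)$. (It also follows that $z'=s-z\in\mathbb{Q}(y)$.)

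There is no real obstacle. The only point needing care is the nonvanishing of the denominator $r-2y$, and that is precisely where the hypothesis $y\notin\mathbb{Q}$ is used.

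In the intended application, take $y=u$, $y'=u'$ and $z\in\{v,w\}$. The rationality of $u+u'$, $v+v'$, $w+w'$, $uv'+vu'$ and $uw'+wu'$ can be read off from the coefficients in \eqref{eq22} together with $ww'\in\mathbb{Q}$. Note that $uw'+vv'+wu'$ is rational, but $uw'+wu'$ alone needs a further argument, which is presumably handled separately in the paper.
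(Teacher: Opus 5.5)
Your proof is correct and is the same argument as the paper's: substitute $y'=r-y$ and $z'=s-z$, solve $z(r-2y)=q-sy$, and use $y\notin\mathbb{Q}$ to get $r-2y\neq 0$. On your side remark about the application: the paper obtains $uw'+wu'\in\mathbb{Q}$ by subtracting $vv'$ from the rational $x^2$-coefficient $uw'+vv'+wu'$. Here $vv'$ is rational because the Galois group either fixes both $v$ and $v'$ or swaps them, so the needed fact is $vv'\in\mathbb{Q}$, not $ww'\in\mathbb{Q}$.
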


\begin{proof}
Let $y+y'=r_1\in\mathbb{Q}$ and $z+z'=r_2\in\mathbb{Q}$. 
Plugging the expressions $y'=r_1-y$ and $z'=r_2-z$ into $yz'+y'z=r_3\in\mathbb{Q}$, we obtain 
\[
r_3=yz'+y'z = y(r_2-z) + (r_1-y)z = r_2y+z(r_1-2y).
\]
Note that $r_1-2y\neq 0$, since $y\notin\mathbb{Q}$. Therefore $z=(r_3-r_2y)/(r_1-2y)\in\mathbb{Q}(y)$.
\end{proof}

Considering the coefficients of $x$, $x^2$ and $x^3$ in \eqref{eq22} and taking into account that $vv'$ is a rational number, we obtain that all three numbers $uv'+vu'$,  $uw'+wu'$ and $vw'+wv'$ are rational. 
Recall that at least one of the numbers $u, v$ and $w$ is a quadratic algebraic number. If $u\notin\mathbb{Q}$, then, applying 
Lemma~\ref{le779} to rational numbers $uv'+vu'$ and  $uw'+wu'$, we get that both $v$ and $w$ belong to $\mathbb{Q}(u)$.  
Similarly, we obtain that if $v\notin\mathbb{Q}$, then $u,w\in\mathbb{Q}(v)$, and if  
$w\notin\mathbb{Q}$, then $u,v\in\mathbb{Q}(w)$. This completes the proof that all the numbers $u,v$ and $w$ belong to the same quadratic field.


Let $t\neq 1$ be a square-free integer such that $u,v,w\in\mathbb{Q}(\sqrt{t})$.  
Since both numbers in any pair $(u,u')$, $(v,v')$ and $(w,w')$ are algebraic conjugates, we can express these numbers as follows:
\begin{align*} 
u &=  a+b\sqrt{t}, & v &=  c+d\sqrt{t}, &  w &=  e+f\sqrt{t},\\ 
u' &=  a-b\sqrt{t}, &  v' &=  c-d\sqrt{t}, & w' &=  e-f\sqrt{t},
\end{align*}
where $a,b,c,d,e,f\in\mathbb{Q}$. Then, in view of \eqref{eq745}, 
\begin{align}\label{eq809}
p(x) = &(x^{4} + ux^{2} + vx + w)(x^{4} + u'x^{2} + v'x + w')\nonumber\\
       = &\left( x^{4} + (a+b\sqrt{t})x^{2} + (c+d\sqrt{t})x + e+f\sqrt{t} \right)\cdot\\
          & \left( x^{4} + (a-b\sqrt{t})x^{2} + (c-d\sqrt{t})x + e-f\sqrt{t} \right).\nonumber
\end{align} 
By expanding this product, we obtain that $p(x)$ is of the form $p_1(x)$, given in  Theorem~\ref{t2}.

Conversely, let $p(x)=p_1(x)$ be an irreducible polynomial, given in Theorem~\ref{t2}. 
Then $p(x)$ factors as in \eqref{eq809} and the four roots of the first factor sum to zero.

\vspace{0,3cm}

\underline{Case $N=4$}. 
Without loss of generality, we assume the relations 
\begin{equation}\label{eq8722}
\begin{split}
\alpha_{1} + \alpha_{2} + \alpha_{3} + \alpha_{4} &= 0,\\
\alpha_{5} + \alpha_{6} + \alpha_{7} + \alpha_{8} &= 0.
\end{split}
\end{equation}
Since $N=4$, there exists a relation $\alpha_{i} + \alpha_{j} + \alpha_{k} + \alpha_{l} = 0$  that is distinct from these two relations. 
By Lemma~\ref{10}, $|\{ \alpha_{i}, \alpha_{j}, \alpha_{k}, \alpha_{l}\}\cap \{ \alpha_{1}, \alpha_{2}, \alpha_{3}, \alpha_{4}\}|=2$  and 
$|\{ \alpha_{i}, \alpha_{j}, \alpha_{k}, \alpha_{l}\}\cap \{ \alpha_{5}, \alpha_{6}, \alpha_{7}, \alpha_{8}\}|=2$. 
Without loss of generality, we can assume that $i=1$, $j=2$, $k=5$ and $l=6$. 
So we have $\alpha_{1} + \alpha_{2} + \alpha_{5} + \alpha_{6} = 0$. From this relation and the fact that $tr(\alpha)=0$ we 
derive one more relation $\alpha_{3} + \alpha_{4} + \alpha_{7} + \alpha_{8} = 0$. Therefore, we have four distinct relations:
\begin{equation}\label{eq884}
\begin{split}
\alpha_{1} + \alpha_{2} + \alpha_{3} + \alpha_{4} &= 0,\\
\alpha_{5} + \alpha_{6} + \alpha_{7} + \alpha_{8} &= 0,\\
\alpha_{1} + \alpha_{2} + \alpha_{5} + \alpha_{6} &= 0,\\
\alpha_{3} + \alpha_{4} + \alpha_{7} + \alpha_{8} &= 0.
\end{split}
\end{equation}
Since $N=4$, there are no other relations involving four distinct conjugates of $\alpha$. These relations immediately imply the following equalities:
\begin{equation}\label{eq24}
\begin{split}
\alpha_{1} + \alpha_{2} &= \alpha_{7} + \alpha_{8} = \beta,\\
\alpha_{3} + \alpha_{4} &= \alpha_{5} + \alpha_{6} = -\beta.
\end{split}
\end{equation}
We will prove that $\beta$ and $-\beta$ are quadratic conjugates. 
Note that the Galois group $G$ permutes the relations in  \eqref{eq884} and for any 
two distinct such relations $\alpha_{i_1}+\alpha_{i_2}+\alpha_{i_3}+\alpha_{i_4}=0$  and 
$\alpha_{j_1}+\alpha_{j_2}+\alpha_{j_3}+\alpha_{j_4}=0$ we have that the intersection
\[
\{\alpha_{i_1},\alpha_{i_2},\alpha_{i_3},\alpha_{i_4}\}  \cap \{\alpha_{j_1},\alpha_{j_2},\alpha_{j_3},\alpha_{j_4}\}  
\]
equals $\emptyset$, $\{\alpha_1,\alpha_2\}$, $\{\alpha_3,\alpha_4\}$, $\{\alpha_5,\alpha_6\}$ or $\{\alpha_7,\alpha_8\}$. 
Also, any automorphism $\phi$ in $G$ maps two distinct relations in  \eqref{eq884} to distinct relations (since $\phi$ is injective and any two distinct relations contain in total at least 6 distinct conjugates of $\alpha_1$). 
Moreover, any conjugate $\alpha_i$ in the non-empty such intersection uniquely determines the intersection itself. 
Thus, if $\phi(\alpha_1)=\alpha_3$, then $\phi(\alpha_2)=\alpha_4$. 
Similarly, any automorphism in $G$ maps $\{\alpha_{1}, \alpha_{2}\}$ to $\{\alpha_{1}, \alpha_{2}\}, \{\alpha_{3}, \alpha_{4}\}, \{\alpha_{5}, \alpha_{6}\}$ or $\{\alpha_{7}, \alpha_{8}\}$. 
This, in view of \eqref{eq24}, implies that $\beta$ and $-\beta$ are algebraic conjugates and $\deg(\beta)\leq 2$. 
Now, we show that $\beta\notin\mathbb{Q}$. Indeed, if $\beta\in\mathbb{Q}$, then $\beta=-\beta=0$. 
But in that case, we would derive a new relation $\alpha_{1} + \alpha_{2} + \alpha_{7} + \alpha_{8} = 0$. 
A contradiction. 
Hence, $\beta$ and $-\beta$ are quadratic conjugates. 
Denote $\beta=a\sqrt{t}$, where $0\neq a\in\mathbb{Q}$ and $t\neq 1$ is a square-free integer. 
Now the minimal polynomial of $\alpha$ can be expressed as follows:
\begin{equation}\label{eq25}
\begin{split}
p(x) =\ &(x^{2} - a\sqrt{t}x + \alpha_{1}\alpha_{2})(x^{2} - a\sqrt{t}x + \alpha_{7}\alpha_{8})\cdot\\
&(x^{2} + a\sqrt{t}x + \alpha_{3}\alpha_{4})(x^{2} + a\sqrt{t}x + \alpha_{5}\alpha_{6}).
\end{split}
\end{equation}
The idea here is to pair the factors that include $\alpha_{1}\alpha_{2}$ and $\alpha_{7}\alpha_{8}$. Similarly, we pair factors that include $\alpha_{3}\alpha_{4}$ and $\alpha_{5}\alpha_{6}$:
\begin{equation}\label{eq26}
\begin{split}
p(x) =\ &\left(x^{4} - 2a\sqrt{t}x^{3} + (\alpha_{1}\alpha_{2} + \alpha_{7}\alpha_{8} + a^{2}t)x^{2}\right.\\
&\left. - a\sqrt{t}(\alpha_{1}\alpha_{2} + \alpha_{7}\alpha_{8})x + \alpha_{1}\alpha_{2}\alpha_{7}\alpha_{8}\right)\cdot\\
&\left(x^{4} + 2a\sqrt{t}x^{3} + (\alpha_{3}\alpha_{4} + \alpha_{5}\alpha_{6} + a^{2}t)x^{2}\right.\\
&\left. + a\sqrt{t}(\alpha_{3}\alpha_{4} + \alpha_{5}\alpha_{6})x + \alpha_{3}\alpha_{4}\alpha_{5}\alpha_{6}\right).
\end{split}
\end{equation}
We claim that $\alpha_{1}\alpha_{2} + \alpha_{7}\alpha_{8}$, $\alpha_{3}\alpha_{4} + \alpha_{5}\alpha_{6}$, $\alpha_{1}\alpha_{2}\alpha_{7}\alpha_{8}$ and $\alpha_{3}\alpha_{4}\alpha_{5}\alpha_{6}$ all lie in the same quadratic field $K=\mathbb{Q}(\sqrt{t})$. Indeed, let 
\[
p(x)=x^8+c_6x^6+c_5x^5+c_4x^4+c_3x^3+c_2x^2+c_1x+c_0\in\mathbb{Q}[x].
\] 
(Note that $c_7=0$, since $tr(\alpha_1)=0$.)
By expanding the expression \eqref{eq25}, we obtain:
\begin{equation*}
\begin{split}
c_{1} &= a\sqrt{t}\left(\alpha_{1}\alpha_{2}\alpha_{7}\alpha_{8}(\alpha_{3}\alpha_{4} + \alpha_{5}\alpha_{6}) - \alpha_{3}\alpha_{4}\alpha_{5}\alpha_{6}(\alpha_{1}\alpha_{2} + \alpha_{7}\alpha_{8})\right),\\
c_{3} &= a^{3}t\sqrt{t}(\alpha_{3}\alpha_{4} + \alpha_{5}\alpha_{6} - \alpha_{1}\alpha_{2} - \alpha_{7}\alpha_{8}) + 2a\sqrt{t}(\alpha_{1}\alpha_{2}\alpha_{7}\alpha_{8} - \alpha_{3}\alpha_{4}\alpha_{5}\alpha_{6}),\\
c_{4} &= a^{4}t^{2} - a^{2}t(\alpha_{1}\alpha_{2} + \alpha_{3}\alpha_{4} + \alpha_{5}\alpha_{6} + \alpha_{7}\alpha_{8})\\
&+ (\alpha_{1}\alpha_{2} + \alpha_{7}\alpha_{8})(\alpha_{3}\alpha_{4} + \alpha_{5}\alpha_{6}) + \alpha_{1}\alpha_{2}\alpha_{7}\alpha_{8} + \alpha_{3}\alpha_{4}\alpha_{5}\alpha_{6},\\
c_{5} &= a\sqrt{t}(\alpha_{1}\alpha_{2} + \alpha_{7}\alpha_{8} - \alpha_{3}\alpha_{4} - \alpha_{5}\alpha_{6}),\\
c_{6} &= \alpha_{1}\alpha_{2} + \alpha_{7}\alpha_{8} + \alpha_{3}\alpha_{4} + \alpha_{5}\alpha_{6} - 2a^{2}t.
\end{split}
\end{equation*}
The coefficients $c_{5}$ and $c_{6}$ imply 
\begin{equation*}
\begin{split}
\alpha_{1}\alpha_{2} + \alpha_{7}\alpha_{8} &= \frac{c_{6}}{2} + a^{2}t + \frac{c_{5}}{2at}\sqrt{t} = b+c\sqrt{t},\\
\alpha_{3}\alpha_{4} + \alpha_{5}\alpha_{6} &= \frac{c_{6}}{2} + a^{2}t - \frac{c_{5}}{2at}\sqrt{t} = b-c\sqrt{t},
\end{split}
\end{equation*}
where $b,c\in\mathbb{Q}$. Thus, the numbers $\alpha_{1}\alpha_{2} + \alpha_{7}\alpha_{8}$ and $\alpha_{3}\alpha_{4} + \alpha_{5}\alpha_{6}$ 
lie in a quadratic field $K$. Similarly, the coefficients $c_{3}$ and $c_{5}$ imply
\begin{equation}\label{eq27}
\alpha_{1}\alpha_{2}\alpha_{7}\alpha_{8} - \alpha_{3}\alpha_{4}\alpha_{5}\alpha_{6} = \Big(\frac{c_{3}}{2at}+\frac{ac_{5}}{2}\Big)\sqrt{t} = 2e\sqrt{t},
\end{equation}
where $e\in\mathbb{Q}$. Note that $c$ and $e$ are not both zero, since otherwise $c_{1}=c_{3}=c_{5}=0$ and we obtain an even polynomial. This immediately implies $\alpha_{i}=-\alpha_{j}$, which means that we have at least six equations of the form $\alpha_{i_{1}}+\alpha_{i_{2}}+\alpha_{i_{3}}+\alpha_{i_{4}}=0$. A contradiction. By using the above results, we obtain 
\begin{equation*}
c_{4} = a^{4}t^{2} - 2a^{2}bt + b^{2} - c^{2}t + \alpha_{1}\alpha_{2}\alpha_{7}\alpha_{8} + \alpha_{3}\alpha_{4}\alpha_{5}\alpha_{6}
\end{equation*}
This implies 
\begin{equation}\label{eq28}
\alpha_{1}\alpha_{2}\alpha_{7}\alpha_{8} + \alpha_{3}\alpha_{4}\alpha_{5}\alpha_{6} = 2d, 
\end{equation}
where $d\in\mathbb{Q}$. Equations \eqref{eq27} and \eqref{eq28} imply that
\begin{equation*}
\begin{split}
\alpha_{1}\alpha_{2}\alpha_{7}\alpha_{8} &= d+e\sqrt{t},\\
\alpha_{3}\alpha_{4}\alpha_{5}\alpha_{6} &= d-e\sqrt{t}.
\end{split}
\end{equation*}
Hence, the numbers $\alpha_{1}\alpha_{2}\alpha_{7}\alpha_{8}$ and $\alpha_{3}\alpha_{4}\alpha_{5}\alpha_{6}$ 
lie in a quadratic field $K$ and the expression of $p(x)$ in \eqref{eq26} becomes
\begin{equation}\label{eq894}
\begin{split}
p(x) =\ &(x^{4} - 2a\sqrt{t}x^{3} + (b+c\sqrt{t} + a^{2}t)x^{2} - a\sqrt{t}(b+c\sqrt{t})x + d+e\sqrt{t})\cdot\\
&(x^{4} + 2a\sqrt{t}x^{3} + (b-c\sqrt{t} + a^{2}t)x^{2} + a\sqrt{t}(b-c\sqrt{t})x + d-e\sqrt{t}).
\end{split}
\end{equation}
By expanding this product, we obtain that $p(x)$ is of the form $p_2(x)$, given in  Theorem~\ref{t2}.

Conversely, let $p(x)=p_2(x)$ be an irreducible polynomial, given in Theorem~\ref{t2}. 
Then $p(x)$ factors as in \eqref{eq894}. Let $u,u',v$ and $v'$ be complex numbers, defined as follows:
\begin{align*} 
u+u' &=  b+c\sqrt{t}, \\ 
uu' &=  d+e\sqrt{t},\\
v+v' &=  b-c\sqrt{t}, \\ 
vv' &=  d-e\sqrt{t}.
\end{align*}
One can check (e.g., using SageMath~\cite{sagemath}) that the product $(x^2-a\sqrt{t}x+u)(x^2-a\sqrt{t}x+u')$ equals the first factor in \eqref{eq894} and the product  $(x^2+a\sqrt{t}x+v)(x^2+a\sqrt{t}x+v')$ equals the second factor in \eqref{eq894}. Hence, 
\[
p(x) = (x^2-a\sqrt{t}x+u)(x^2-a\sqrt{t}x+u')(x^2+a\sqrt{t}x+v)(x^2+a\sqrt{t}x+v').
\]
Let $\alpha_1$ and $\alpha_2$ be the roots of the polynomial $x^2-a\sqrt{t}x+u$. Then $\alpha_1+\alpha_2=a\sqrt{t}$. 
Similarly, let  $\alpha_3$ and $\alpha_4$ be the roots of the polynomial $x^2+a\sqrt{t}x+v$. Then $\alpha_3+\alpha_4=-a\sqrt{t}$. 
Therefore $\alpha_1+\alpha_2+\alpha_3+\alpha_4=0$.

\vspace{0,3cm}

\underline{Case $N=6$}. 
Without loss of generality, we assume the relations 
\begin{equation}\label{eq1019}
\begin{split}
\alpha_{1} + \alpha_{2} + \alpha_{3} + \alpha_{4} &= 0,\\
\alpha_{5} + \alpha_{6} + \alpha_{7} + \alpha_{8} &= 0,\\
\alpha_{1} + \alpha_{2} + \alpha_{5} + \alpha_{6} &= 0,\\
\alpha_{3} + \alpha_{4} + \alpha_{7} + \alpha_{8} &= 0.
\end{split}
\end{equation}
Recall that every algebraic conjugate of $\alpha$ appears exactly in half of the relations. 
Since $N=6$ and $\alpha_1$ appears in exaclty two relations in \eqref{eq1019}, 
there exists a relation $\alpha_{1} + \alpha_{j} + \alpha_{k} + \alpha_{l} = 0$  that is distinct from the four relations in \eqref{eq1019}. 
By Lemma~\ref{10}, $|\{ \alpha_{1}, \alpha_{j}, \alpha_{k}, \alpha_{l}\}\cap \{ \alpha_{1}, \alpha_{2}, \alpha_{3}, \alpha_{4}\}|=2$  and 
$|\{ \alpha_{1}, \alpha_{j}, \alpha_{k}, \alpha_{l}\}\cap \{ \alpha_{5}, \alpha_{6}, \alpha_{7}, \alpha_{8}\}|=2$. 
Without loss of generality, we can assume that $j\in\{2,3,4\}$ and $k,l\in\{5,6,7,8\}$.  
Note that $j\neq 2$, since otherwise $\alpha_1+\alpha_2$ would appear in three distinct relations and, 
in view of Lemma~\ref{11}, the minimal polynomial of $\alpha$ would be an even polynomial (see Remark~\ref{evenpol}). Hence, $j\in\{3,4\}$. Without loss of generality, assume that $j=3$. 
Moreover, by Lemma~\ref{10}, $|\{ \alpha_{1}, \alpha_{3}, \alpha_{k}, \alpha_{l}\}\cap \{ \alpha_{1}, \alpha_{2}, \alpha_{5}, \alpha_{6}\}|=2$  and 
$|\{ \alpha_{1}, \alpha_{3}, \alpha_{k}, \alpha_{l}\}\cap \{ \alpha_{3}, \alpha_{4}, \alpha_{7}, \alpha_{8}\}|=2$. 
Thus, $|\{k,l\}\cap\{5,6\}|=1$ and $|\{k,l\}\cap\{7,8\}|=1$. 
Without loss of generality, we assume that $k=5$ and $l=7$. 
We obtained the relation $\alpha_{1} + \alpha_{3} + \alpha_{5} + \alpha_{7} = 0$.  
From this and the fact that $tr(\alpha)=0$ we 
derive one more relation $\alpha_{2} + \alpha_{4} + \alpha_{6} + \alpha_{8} = 0$. 
Therefore, we have six distinct relations:
\begin{equation}\label{eq1053}
\begin{split}
\alpha_{1} + \alpha_{2} + \alpha_{3} + \alpha_{4} &= 0,\\
\alpha_{5} + \alpha_{6} + \alpha_{7} + \alpha_{8} &= 0,\\
\alpha_{1} + \alpha_{2} + \alpha_{5} + \alpha_{6} &= 0,\\
\alpha_{3} + \alpha_{4} + \alpha_{7} + \alpha_{8} &= 0,\\
\alpha_{1} + \alpha_{3} + \alpha_{5} + \alpha_{7} &= 0,\\
\alpha_{2} + \alpha_{4} + \alpha_{6} + \alpha_{8} &= 0.
\end{split}
\end{equation}
Since $N=6$, there are no other relations involving four distinct conjugates of $\alpha$.

A computation with SageMath \cite{sagemath} shows that there are exactly 
16 transitive subgroups of the symmetric group $S_8$ which preserve the linear relations  \eqref{eq1053} (the code of this and further computations is provided in Listing~\ref{lst:2} in the Appendix). 
These are given in Table~\ref{table:tsS8pN6}.  
\begin{table}[h!]
\centering
\begin{tabular}{ |c|c|c| } 
\hline
Group $G$ & $\# G$ & Generators of $G$\\
\hline
$C_2\times C_2\times C_2$ & 8 & $(1\,2)(3\,4)(5\,6)(7\,8)$, $(1\,7)(2\,8)(3\,5)(4\,6)$,  \\
                                            &    & $(1\,4)(2\,3)(5\,8)(6\,7)$\\
 $C_4\times C_2$ & 8 & $(1\,4\,7\,6)(2\,3\,8\,5)$, $(1\,3\,7\,5)(2\,4\,8\,6)$ \\
 $C_4\times C_2$ & 8 & $(1\,3\,4\,2)(5\,7\,8\,6)$, $(1\,7\,4\,6)(2\,5\,3\,8)$ \\
 $C_4\times C_2$ & 8 & $(1\,4\,6\,7)(2\,8\,5\,3)$, $(1\,2\,6\,5)(3\,4\,8\,7)$ \\
 $D_4$   & 8  & $(1\,2)(3\,4)(5\,6)(7\,8)$, $(1\,6\,4\,7)(2\,8\,3\,5)$\\
 $D_4$   & 8  & $(1\,8)(2\,4)(3\,6)(5\,7)$, $(1\,7\,6\,4)(2\,3\,5\,8)$ \\
  $D_4$   & 8  & $(1\,2)(3\,6)(4\,5)(7\,8)$, $(1\,4\,7\,6)(2\,3\,8\,5)$ \\
  $D_4$   & 8  &  $(1\,4)(2\,3)(5\,8)(6\,7)$, $(1\,5\,7\,3)(2\,6\,8\,4)$\\
  $D_4$   & 8  & $(1\,5)(2\,7)(3\,6)(4\,8)$, $(1\,3\,4\,2)(5\,7\,8\,6)$ \\
  $D_4$   & 8  & $(1\,7)(2\,8)(3\,5)(4\,6)$, $(1\,2\,6\,5)(3\,4\,8\,7)$ \\
 $C_2\times D_4$  & 16  & $(1\,8)(2\,7)(3\,4)(5\,6)$, $(1\,5)(2\,6)(3\,7)(4\,8)$, $(1\,7)(2\,8)$\\
  $C_2\times D_4$  & 16  & $(1\,5)(2\,7)(3\,6)(4\,8)$, $(1\,6)(2\,5)(3\,8)(4\,7)$, $(1\,4)(5\,8)$ \\
  $C_2\times D_4$  & 16  &  $(1\,8)(2\,4)(3\,6)(5\,7)$, $(1\,4)(2\,3)(5\,8)(6\,7)$, $(2\,5)(4\,7)$\\
  $C_2\times A_4$  & 16  & $(1\,8)(2\,6\,5\,7\,3\,4)$, $(1\,7)(2\,8)(3\,5)(4\,6)$\\
  $S_4$ & 24  & $(1\,8)(2\,7)(3\,4)(5\,6)$, $(1\,2\,4\,3)(5\,6\,8\,7)$\\
$C_2\times S_4$   & 48  & $(2\,3\,5)(4\,7\,6)$, $(1\,4\,7\,6)(2\,3\,8\,5)$, $(1\,3\,7\,5)(2\,4\,8\,6)$\\     
\hline
\end{tabular}
\vspace{0,2cm}
\caption{All the transitive subgroups of $S_8$ that preserve all the relations in \eqref{eq1053}. }
\label{table:tsS8pN6}
\end{table}
One can easily check that for every such subgroup, in view of the relations \eqref{eq1053}, the orbit of $\alpha_1-\alpha_8$ is $\{\alpha_1-\alpha_8, \alpha_8-\alpha_1\}$ and the orbit of $\alpha_1+\alpha_8$ is 
$\{\alpha_1+\alpha_8,\alpha_2+\alpha_7,\alpha_3+\alpha_6,\alpha_4+\alpha_5\}$. 
Note that $\alpha_1-\alpha_8\neq \alpha_8-\alpha_1$, since otherwise $\alpha_1=\alpha_8$, which is impossible.  Hence, $2\gamma:=\alpha_1-\alpha_8$ is a quadratic 
algebraic number with zero trace and $2\delta:=\alpha_1+\alpha_8$ is of degree at most 4.  
Then $\alpha_1=\gamma+\delta$ and 
\[
8=\deg(\alpha_1) =\deg\left( \gamma+\delta\right) \leq \deg\left( \gamma\right)\cdot \deg\left( \delta\right) = 2\cdot 4=8.
\]  
Thus $\delta$ is of degree 4 and $tr(2\delta) =(\alpha_1+\alpha_8)+(\alpha_2+\alpha_7)+(\alpha_3+\alpha_6)+(\alpha_4+\alpha_5)=tr(\alpha_1)=0$. So that the trace of $\delta$ equals 0. Let $\gamma=\gamma_1$ and $\gamma_2$ be the algebraic conjugates of $\gamma$ and 
$\delta=\delta_1, \delta_2,\delta_3$ and $\delta_4$ be the algebraic conjugates of $\delta$. 
Since $\alpha_1$ has degree 8 and equals the sum of a quadratic algebraic number $\gamma$ and a quartic algebraic number $\delta$, we obtain that $\gamma_i+\delta_j$, for $i=1,2$ and $j=1,2,3,4$, are the algebraic conjugates of $\alpha_1$.
  
The minimal polynomial of $\gamma$ is of the form $x^{2}-a$ and the minimal polynomial of $\delta$ is of the form $R(x)=x^{4}+bx^{2}+cx+d$, where $a,b,c,d\in\mathbb{Q}$ (recall that $tr(\gamma)=tr(\delta)=0$). 
Now, the minimal polynomial $p(x)$ of $\alpha_1$ can be expressed as
\begin{equation*}
\begin{split}
p(x) &= \prod_{\substack{i=1,2\\j=1,2,3,4}} (x-\gamma_i-\delta_j)\\
&=\prod_{i=1,2} (x-\gamma_i-\delta_1)(x-\gamma_i-\delta_2)(x-\gamma_i-\delta_3)(x-\gamma_i-\delta_4)\\
&= R(x-\gamma_1)R(x-\gamma_2) = R(x-\sqrt{a})R(x+\sqrt{a}).
\end{split}
\end{equation*}
By expanding this product, we obtain that $p(x)$ is of the form $p_3(x)$, given in  Theorem~\ref{t2}.

Conversely, let $p(x)=p_3(x)$ be an irreducible polynomial, given in Theorem~\ref{t2}. 
Then one can check (e.g., using SageMath~\cite{sagemath}) that $p(x)$ factors as $p(x) = R(x-\sqrt{a})R(x+\sqrt{a})$, where $R(x)=x^{4}+bx^{2}+cx+d$. 
Let $\delta_1, \delta_2,\delta_3$ and $\delta_4$ be the roots of $R(x)$. 
Then $\pm\sqrt{a}+\delta_j$, for $j=1,2,3,4$, are the roots of $p(x)$.  
Selecting $\alpha_1:=\sqrt{a}+\delta_1$, $\alpha_2:=\sqrt{a}+\delta_2$, $\alpha_3:=-\sqrt{a}+\delta_3$ and 
$\alpha_4:=-\sqrt{a}+\delta_4$, we obtain four distinct roots of $p(x)$ that sum to zero: 
\[
\alpha_1+\alpha_2+\alpha_3+\alpha_4=\delta_1 + \delta_2+\delta_3+\delta_4=0.
\]
\end{proof}


\printbibliography

\appendix
\section{Source code of SageMath calculations}

\begin{lstlisting}[language=Python,basicstyle=\ttfamily\scriptsize,caption=SageMath code used in the proof of Theorem~\ref{t1}.,label={lst:1}]
# Generates the list of all the transitive subgroups of S_8.
G = SymmetricGroup(8)
all_subgroups = G.subgroups()
transitive_subgroups = [H for H in all_subgroups if H.is_transitive()]

# For every transitive subgroup H, calculate the following:
# (Code segment C1) The list Relations, which corresponds to the orbit of the relation $\alpha_1=\alpha_2+\alpha_3+\alpha_4$ under H. Each list [i,j,k,l], i,j,k,l in {1,2,...,8}, j<k<l, in Relations corresponds to the relation $\alpha_i=\alpha_j+\alpha_k+\alpha_l$.
# (Code segment C2) Creates matrices R and R_i, which correspond to matrices $\mathcal{R}$ and $\mathcal{R}_i$, respectively, described in the proof of Theorem (*@\ref{t1}@*). 
# (Code segment C3) Checks if rank(R) = rank(R_i) for some i. In such case we obtain that $\alpha_1=-\alpha_i$. 
# (Code segment C4) If rank(R) = rank(R_i) are distinct for all i, then the list Suitable is appended with the subgroup H.
# (Code segment C5) If rank(R) = rank(R_i) for two distinct values of i (i=j and i=k), then we obtain that $\alpha_j=\alpha_k$ (see the proof of Theorem (*@\ref{t1}@*)), which is impossible. If rank(R) = rank(R_i) for exactly one value of i, then the list Galois_group_candidates is appended with the subgroup H.
Suitable = []
Galois_group_candidates = []
for H in transitive_subgroups:
    Relations = []
    for f in H.list():
        if [f(1)]+sorted([f(2),f(3),f(4)]) not in Relations:   # C1       
            Relations.append([f(1)]+sorted([f(2),f(3),f(4)]))  # C1   
    if len(Relations) == 8:      
        R = matrix(QQ,0,8)                 # C2
        for i in range(8):                 # C2
            row = 8*[0]                    # C2
            row[Relations[i][0]-1]=1       # C2
            for j in [1,2,3]:              # C2
                row[Relations[i][j]-1]=-1  # C2
            R = R.stack(matrix(QQ,[row]))  # C2
        R_i=[R.stack(matrix(QQ,[[1]+(j-2)*[0]+[1]+(8-j)*[0]])) for j in range(2,9)]  #C3
        if math.prod([R_i[j].rank()-R.rank() for j in range(7)])!=0:#C4   
            Suitable.append(H)  # C4
        if [R_i[j].rank()-R.rank() for j in range(7)].count(0) == 1: #C5
            Galois_group_candidates.append(H)                        #C5
\end{lstlisting}

\begin{lstlisting}[language=Python,basicstyle=\ttfamily\scriptsize,caption=SageMath code used in the proof of Theorem~\ref{t2}.,label={lst:2}]
# transitive_subgroups - same as in Listing (*@\ref{lst:1}@*)
# For every H in transitive_subgroups, code segment C1 checks if H preserves all the relations in Rel which correspond to relations (*@\eqref{eq1053}@*) in the proof of Theorem (*@\ref{t2}@*). If H preserves all the relations in Rel, then the list Suitable is appended with the group H.
#Each relation $\alpha_i+\alpha_j+\alpha_k+\alpha_l=0$ is encoded as the set {i,j,k,l} in Rel.
Rel = Set([{1,2,3,4},{5,6,7,8},{1,2,5,6},{3,4,7,8},{1,3,5,7},{2,4,6,8}])
Suitable = []
for H in transitive_subgroups:
    FixesAllRelations = True
    for f in H.list():                                              # C1
        if not all([({f(i) for i in rel} in Rel) for rel in Rel]):  # C1
            FixesAllRelations = False
            break
    if FixesAllRelations:
        Suitable.append(H)
        
# For every H in Suitable, in view of the relations (*@\eqref{eq1053}@*), the orbit of $\alpha_1-\alpha_8$ is $\{\alpha_1-\alpha_8, \alpha_8-\alpha_1\}$ and the orbit of $\alpha_1+\alpha_8$ is 
   $\{\alpha_1+\alpha_8,\alpha_2+\alpha_7,\alpha_3+\alpha_6,\alpha_4+\alpha_5\}$.
Correct = True
for H in Suitable:
    if {(f(1),f(8)) for f in H.list()} != {(1,8),(8,1),(2,7),(7,2),(3,6),(6,3),(4,5),(5,4)}:
        Correct = False

# Computes the data presented in Table (*@\ref{table:tsS8pN6}@*)
for i in range(16):
    print(Suitable[i].structure_description(),Suitable[i].order(),Suitable[i].gens())
\end{lstlisting}

\end{document}